 \newtheorem{thm}{Theorem}[section]
 \newtheorem{cor}[thm]{Corollary}
 \newtheorem{lem}[thm]{Lemma}
 \theoremstyle{definition}
 \newtheorem{defn}[thm]{Definition}
 \theoremstyle{remark}
 \newtheorem{rem}[thm]{Remark}
 \theoremstyle{remark}
 \theoremstyle{definition}
 \numberwithin{equation}{section}
 \newcommand{\Spec}{\mathrm{Spec}}
 \newcommand{\Frob}{\mathrm{Frob}}
 \newcommand{\Aut}{\mathrm{Aut}}
 \newcommand{\End}{\mathrm{End}}
 \newcommand{\Pic}{\mathrm{Pic}}
 \newcommand{\ord}{\mathrm{ord}}
 \newcommand{\Vol}{\mathrm{Vol}}
 \newcommand{\Gal}{\mathrm{Gal}}
 \newcommand{\GL}{\mathrm{GL}}
 \newcommand{\PGL}{\mathrm{PGL}}
 \newcommand{\SL}{\mathrm{SL}}
 \newcommand{\pr}{\mathrm{pr}}
 \newcommand{\Id}{\mathrm{Id}}
 \newcommand{\inv}{\mathrm{inv}}
 \newcommand{\Nr}{\mathrm{Nr}}
 \newcommand{\fc}{\mathfrak c}
 \newcommand{\fh}{\mathfrak h}
 \newcommand{\fp}{\mathfrak p}
 \newcommand{\fr}{\mathfrak r}
 \newcommand{\fn}{\mathfrak n}
 \newcommand{\fm}{\mathfrak m}
 \newcommand{\fS}{\mathfrak S}
 \newcommand{\cO}{\mathcal{O}}
 \newcommand{\cK}{\mathcal{K}}
 \newcommand{\cG}{\mathcal{G}}
 \newcommand{\cS}{\mathcal{S}}
 \renewcommand{\cD}{\mathcal{D}}
 \newcommand{\cE}{\mathcal{E}}
 \newcommand{\cF}{\mathcal{F}}
 \newcommand{\cI}{\mathcal{I}}
 \newcommand{\R}{\mathbb{R}}
 \newcommand{\C}{\mathbb{C}}
 \newcommand{\E}{\mathbb{E}}
 \newcommand{\F}{\mathbb{F}}
 \newcommand{\M}{\mathbb{M}}
 \newcommand{\Q}{\mathbb{Q}}
 \newcommand{\Z}{\mathbb{Z}}
 \newcommand{\A}{\mathbb{A}}
 \renewcommand{\P}{\mathbb{P}}
 \newcommand{\Ell}{\mathbf{Ell}}
 \newcommand{\To}{\longrightarrow}
 \newcommand{\bs}{\setminus}
 \newcommand{\bD}{\bar{D}}
 \newcommand{\tf}{\widetilde{F}}
 \newcommand{\tp}{\widetilde{\Pi}}
 \newcommand{\Fi}{F_\infty}
 \newcommand{\G}{\Gamma}
 \newcommand{\norm}[1]{\| #1\|}
 \newcommand{\twist}[1]{{^\tau}\!#1}
\begin{document}

\title{Genus formula for modular curves of $\cD$-elliptic sheaves}

\author{Mihran Papikian}

\address{Department of Mathematics, Pennsylvania State University, University Park, PA 16802}

\email{papikian@math.psu.edu}

\thanks{The author was supported in part by NSF grant DMS-0801208 and Humboldt Research Fellowship.}

\subjclass{Primary 11G09, 11G18; Secondary 11G20}

\keywords{Modular curves of $\cD$-elliptic sheaves, asymptotically
optimal series of curves}

% ----------------------------------------------------------------------

\begin{abstract}
We prove a genus formula for modular curves of $\cD$-elliptic
sheaves. We use this formula to show that the reductions of modular
curves of $\cD$-elliptic sheaves attain the Drinfeld-Vladut bound as
the degree of the discriminant of $\cD$ tends to infinity.
\end{abstract}

% ----------------------------------------------------------------------

\maketitle

% ------------------------------------------------------------------------

\section{Introduction} The genus formulas for classical modular
curves and Shimura curves are well-known and quite useful in many
arithmetic problems.

Drinfeld modular curves \cite{Drinfeld} play a role over function
fields of positive characteristic similar to classical modular
curves over $\Q$. Many invariants of Drinfeld modular curves,
including their genera, were calculated by Gekeler in the 80's, cf.
\cite{GekelerLNM}, \cite{Invariants}. The function field analogue of
Shimura curves was introduced by Laumon, Rapoport and Stuhler in
\cite{LRS}. These curves are moduli spaces of certain objects,
called $\cD$-elliptic sheaves, which generalize the notion of
Drinfeld modules. The primary purpose of the present paper is to
produce a genus formula for modular curves of $\cD$-elliptic sheaves
(Theorem \ref{thmGen}). We also compute some other invariants of
these curves, such as the number of supersingular and elliptic
points. It turns out, maybe not surprisingly, that the genus formula
for modular curves of $\cD$-elliptic sheaves is strikingly similar
to the genus formula for Shimura curves. In the final section of the
paper we discuss an application of main results: we construct a new
sequence of curves over finite fields which is asymptotically
optimal. In view of the fact that only a few general families of
such sequences are known, this result is of independent interest.

The paper is organized as follows: In Section \ref{Sec1}, we fix the
notation and conventions which are used throughout the article. In
Section \ref{Sec2}, we recall the definition of $\cD$-elliptic
sheaves and the main geometric properties of their moduli schemes.
In Section \ref{Sec3}, we study the points on modular curves of
$\cD$-elliptic sheaves over finite fields. This relies on an
analogue of the Honda-Tate theory developed in \cite{LRS}. Here we
compute the number of $\cD$-elliptic sheaves having extra
automorphisms or large endomorphism algebras. These calculations are
used in the proof of the genus formula, and also in the construction
of asymptotically optimal sequences of curves. In Section
\ref{Sec4}, we compute the genus of modular curves of $\cD$-elliptic
sheaves. Finally, in Section \ref{Sec5}, we recall the definition of
asymptotically optimal sequences of curves over finite fields and
discuss how to construct such sequences using the modular curves of
$\cD$-elliptic sheaves.

\subsection*{Acknowledgments} I thank Professor E.-U. Gekeler for many
stimulating conversations on the subject of this article. The
article was written while I was visiting Saarland University. I
thank the department of mathematics for its hospitality.

\section{Notation}\label{Sec1}

Let $\F_q$ be the finite field with $q$ elements. Let
$C:=\P^1_{\F_q}$ be the projective line over $\F_q$. Denote by
$F=\F_q(T)$ the field of rational functions on $C$. The set of
closed points on $C$ (equivalently, places of $F$) is denoted by
$|C|$. For each $x\in |C|$, we denote by $\cO_x$ and $F_x$ the
completions of $\cO_{C,x}$ and $F$ at $x$, respectively. The residue
field of $\cO_x$ is denoted by $\F_x$, the cardinality of $\F_x$ is
denoted by $q_x$, the degree $m$ extension of $\F_x$ is denoted by
$\F_x^{(m)}$, and $\deg(x):=\dim_{\F_q} (\F_x)$. We assume that the
valuation $\ord_x:F_x\to\Z$ is normalized by $\ord_x(\varpi_x)=1$,
where $\varpi_x$ is a fixed uniformizer of $\cO_x$; the norm
$|\cdot|_x$ on $F_x$ is $q_x^{-\ord_x(\cdot)}$. Denote the adele
ring of $F$ by $\A:=\prod'_{x\in |C|} F_x$.

\vspace{0.1in}

Let $\infty:=1/T$ and $A:=\F_q[T]$ be the subring of $F$ consisting
of functions which are regular away from $\infty$. The completion of
an algebraic closure of $F_\infty$ is denoted $\C_\infty$. Fox each
$x\in |C|-\infty$, we denote by $\fp_x\lhd A$ the corresponding
prime ideal of $A$. For an ideal $\fn\lhd A$, we define its degree
as $\deg(\fn):=\dim_{\F_q}(A/\fn)$. Note that $\deg(\fp_x)=\deg(x)$.
The adele ring outside of $\infty$ is $\A^\infty:=\prod'_{x\in
|C|-\infty}F_x$. In particular, $\A=\A^\infty\times F_\infty$.

\vspace{0.1in}

Given a ring $H$, we denote by $H^\times$ the group of its units.

\vspace{0.1in}

Let $D$ be a quaternion algebra over $F$, i.e., a $4$-dimensional
central simple algebra over $F$. For $x\in |C|$, we let
$D_x:=D\otimes_F F_x$. We assume throughout the paper that $D$ is
split at $\infty$, i.e., $D_\infty\cong \M_2(F_\infty)$. (Here
$\M_2$ is the ring of $2\times 2$ matrices.) Let $R$ be the set of
places where $D$ is ramified. It is a well-known fact that the
cardinality of $R$ is even, and conversely, for any choice of a
finite set $R\subset |C|$ of even cardinality there is a unique
quaternion algebra ramified exactly at the places in $R$; see
\cite[p. 74]{Vigneras}. If $R\neq \emptyset$, then $D$ is a division
algebra; if $R=\emptyset$, then $D\cong \M_2(F)$. For $R\neq
\emptyset$, define the ideal $\fr:=\prod_{x\in R}\fp_x$. Let
$D^\times$ be the algebraic group over $F$ defined by
$D^\times(B)=(D\otimes_F B)^\times$ for any $F$-algebra $B$; this is
the multiplicative group of $D$.

\vspace{0.1in}

For a closed subscheme $I$ of $C$ with ideal sheaf $\cI$, let
$\cO_I:=\cO_C/\cI$. Fix a locally free sheaf $\cD$ of
$\cO_C$-algebras with stalk at the generic point equal to $D$ and
such that $\cD_x:=\cD\otimes_{\cO_C}\cO_x$ is a maximal order in
$D_x$. Denote $\cD^\infty:=\prod_{x\in |C|-\infty}\cD_x$. For a
finite nonempty closed subscheme $I$ of $C-R-\infty$, let
$$
\cD_I:=\cD\otimes_{\cO_C}\cO_I
\quad\text{and}\quad\cK_I^\infty:=\ker((\cD^\infty)^\times\to
\cD_I^\times).
$$

\section{$\cD$-elliptic sheaves and their moduli
schemes}\label{Sec2}

Let $W$ be an $\F_q$-scheme. Denote by $\Frob_W$ its Frobenius
endomorphism, which is the identity on the points and the $q$-th
power map on the functions. Denote by $C\times W$ the fibred product
$C\times_{\Spec(\F_q)}W$. For a sheaf $\cF$ on $C$ and $\cG$ on $W$,
the sheaf $\pr_1^\ast(\cF)\otimes \pr_2^\ast(\cG)$ on $C\times W$ is
denoted by $\cF\boxtimes \cG$.

Let $z:W\to C$ be a morphism of $\F_q$-schemes such that
$z(W)\subset C-R-\infty$. A \textit{$\cD$-elliptic sheaf over $W$},
with pole $\infty$ and zero $z$, is a sequence
$\E=(\cE_i,j_i,t_i)_{i\in \Z}$, where each $\cE_i$ is a locally free
sheaf of $\cO_{C\times W}$-modules of rank $4$ equipped with a right
action of $\cD$ compatible with the $\cO_C$-action, and where
\begin{align*}
j_i &:\cE_i\to \cE_{i+1}\\
t_i &:\twist{\cE}_{i}:=(\Id_C\times \Frob_{W})^\ast \cE_i\to
\cE_{i+1}
\end{align*}
are injective $\cO_{C\times W}$-linear homomorphisms compatible with
the $\cD$-action. The maps $j_i$ and $t_i$ are sheaf modifications
at $\infty$ and $z$, respectively, which satisfy certain conditions.
We refer to \cite[$\S$2]{LRS} for the precise definition.

Let $I$ be a closed subscheme of $C-R-\infty$. Let
$\E=(\cE_i,j_i,t_i)$ be a $\cD$-elliptic sheaf over $W$. Assume
$z(W)$ is disjoint from $I$. The restriction $\cE_I:=\cE_{i|I\times
W}$ is independent of $i$, and $t$ induces an isomorphism
$^\tau\!\cE_I\cong \cE_I$. A \textit{level-$I$ structure} on $\E$ is
an $\cO_{I\times W}$-linear isomorphism $\iota: \cD_I\boxtimes
\cO_W\cong \cE_I$, compatible with the action of $\cD_I$, which
makes the following diagram commutative:
$$
\xymatrix{ ^\tau\!\cE_I\ar[rr]^-t & &\cE_I\\
& \cD_I\boxtimes \cO_W \ar[lu]^-{^\tau\!\iota}\ar[ru]_-\iota&}
$$

Denote by $\Ell^\cD_I(W)$ the category whose objects are the
$\cD$-elliptic sheaves over $W$ with level $I$-structures and whose
morphisms are isomorphisms. (If $I=\emptyset$, then $\Ell^\cD_I(W)$
is simply the category of $\cD$-elliptic sheaves over $W$.) There is
a natural free action of $\Z$ on $\Ell^\cD_I(W)$: $n\in \Z$ acts by
$$
[n](\cE_i,j_i,t_i;\iota)=(\cE_{i+n},j_{i+n},t_{i+n};\iota).
$$
We denote the quotient category by $\Ell^\cD_I(W)/\Z$. There is also
an action of $\cD_I^\times$ on $\Ell_{I}^\cD(W)$: $g\in
\cD_I^\times$ acts by
$$
(\cE_i,j_i,t_i;\iota)g=(\cE_{i},j_{i},t_{i};\iota\circ g),
$$
where $g$ acts on $\cD_I\boxtimes \cO_W$ via right multiplication on
$\cD_I$. The actions of $\Z$ and $\cD_I^\times$ obviously commute
with each other.

We have the following fundamental theorem:

\begin{thm} Fix some $I\neq \emptyset$.
The functor $W\mapsto \Ell^{\cD}_{I}(W)/\Z$ is representable by a
smooth quasi-projective scheme $X^\cD_I$ over $C':=C-I-R-\infty$ of
pure relative dimension $1$. If $D$ is a division algebra, then
$X^\cD_I$ is proper over $C'$.
\end{thm}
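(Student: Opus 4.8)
The plan is to follow the strategy of \cite{LRS}: use the level structure to rigidify the objects, represent the rigidified problem as a quotient of Quot schemes, and then derive smoothness and properness from deformation theory and the valuative criterion. The enabling first step is to check that a nonempty level-$I$ structure kills automorphisms. If $\varphi$ is an automorphism of a $\cD$-elliptic sheaf $\E=(\cE_i,j_i,t_i)$ that commutes with the $\cD$-action and is compatible with $\iota$, then $\varphi$ restricts to the identity on $\cE_I$, and since the Frobenius structure $t$ propagates the trivialization recorded by $\iota$ throughout the chain $(\cE_i)$, a rigidity argument forces $\varphi=\Id$. Hence each groupoid $\Ell^\cD_I(W)$ is rigid, $W\mapsto \Ell^\cD_I(W)/\Z$ is a set-valued functor, and one may hope to represent it by a scheme rather than merely by a Deligne--Mumford stack.

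To build the representing scheme, I would first note that the local conditions on $j_i$ and $t_i$ at $\infty$ and $z$, together with the $\cD$-action, fix the rank and the fibrewise degrees of each $\cE_i$, and that the chain is periodic, with $\cE_{i+h}$ differing from $\cE_i$ only by a fixed twist at $\infty$ for a suitable period $h$. Thus, modulo the shift, $\E$ is determined by finitely many coherent sheaves of bounded numerical invariants, each parametrized by a Quot scheme over $C\times C'$. The injections $j_i$ and $t_i$, the compatibility with the $\cD$-action, the modification conditions at $\infty$ and $z$, and the isomorphism $\iota$ then cut out a locally closed subscheme of a product of Quot and $\Hom$ schemes. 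The shift action of $\Z$ is free and, by periodicity, admits a fundamental domain of finitely many consecutive $\cE_i$; passing to the quotient yields a quasi-projective scheme $X^\cD_I$ over $C'$.

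Smoothness and the relative dimension would come from deformation theory. For a square-zero extension of an Artinian $\F_q$-algebra I would analyze the infinitesimal deformations of $(\cE_i,j_i,t_i;\iota)$ with the zero held fixed. These are governed by a two-term complex assembled from the modifications $j_i$ and $t_i$, and the crucial point is that the Frobenius-semilinearity of $t_i$ forces the relevant obstruction group to vanish, so the deformation functor is formally smooth. The same computation shows that the tangent space to each fibre of $X^\cD_I\to C'$ is one-dimensional, which gives pure relative dimension $1$.

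The hardest part is properness when $D$ is a division algebra, which I would establish through the valuative criterion. Separatedness reduces to the uniqueness of limits and follows from the rigidity established above. For the existence of limits, given a $\cD$-elliptic sheaf over the fraction field of a complete discrete valuation ring, one must extend it, after an allowed shift, across the closed point. When $D$ is a division algebra it has no nontrivial idempotents, so its local invariants obstruct the splitting that a degeneration would require: any limiting lattice chain stays locally free of rank $4$ with its $\cD$-action intact, and no boundary phenomenon analogous to the cusps of Drinfeld modular curves can occur. Making the requisite boundedness precise in the limit is the step I expect to demand the most care; it is the function-field counterpart of the compactness of Shimura curves attached to division quaternion algebras.
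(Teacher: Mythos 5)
The paper does not actually prove this theorem: its ``proof'' is the single sentence citing (4.1), (5.1) and (6.2) of \cite{LRS}, so the only meaningful comparison is with the argument of Laumon--Rapoport--Stuhler themselves. Your outline does track their strategy in broad strokes (rigidification by level structures, representability via bounded families of bundles with extra data, smoothness by deformation theory, properness by the valuative criterion using that $D$ is a division algebra), so the \emph{approach} is the right one.

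However, as a proof the proposal has genuine gaps at exactly the points where the content lies. First, the rigidity step is argued by the wrong mechanism: the fact that $t$ ``propagates the trivialization'' does not by itself force an automorphism that is the identity on $\cE_I$ to be the identity; the actual argument is Serre's rigidity lemma --- automorphisms of a $\cD$-elliptic sheaf are algebraic over $\F_q$, hence roots of unity of order prime to $p$, and such an element congruent to $1$ modulo a nonzero ideal $I$ must equal $1$. Second, the smoothness claim that ``Frobenius-semilinearity forces the obstruction group to vanish'' is asserted, not proved; in \cite{LRS} smoothness and the relative dimension come from an explicit description of the deformation space (ultimately Drinfeld's computation of the local structure), and this is a real computation, not a soft vanishing. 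Third, and most seriously, the properness argument is only a declaration of intent: you correctly identify that one must extend a $\cD$-elliptic sheaf over a trait after an allowed shift and that the division-algebra hypothesis must exclude degeneration, but the phrase ``its local invariants obstruct the splitting that a degeneration would require'' is not an argument. The actual proof extends the underlying bundles by reflexivity over a two-dimensional regular base, and then shows that a failure of the limit to be a $\cD$-elliptic sheaf would produce a $\cD$-stable proper subsheaf whose existence contradicts the absence of zero divisors (equivalently, of nontrivial idempotents) in $D$; this is the step you yourself flag as the one you have not done. So the proposal is a correct roadmap of the known proof rather than a proof.
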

\begin{proof}
This is a consequence of (4.1), (5.1) and (6.2) in \cite{LRS}.
\end{proof}

\begin{cor}\label{cor2.3}
The functor $W\mapsto \Ell^{\cD}_\emptyset(W)/\Z$ has a coarse
moduli scheme $X^\cD_\emptyset$, which is smooth of pure relative
dimension $1$ over $C-R-\infty$ with geometrically irreducible
fibres. If $D$ is a division algebra, then $X^\cD_\emptyset$ is
proper. For $I\neq \emptyset$, there is a functorial morphism
$X^\cD_I\to X^\cD_\emptyset$ over $C'$ which is finite flat of
degree $\#(\cD_I^\times)/(q-1)$.
\end{cor}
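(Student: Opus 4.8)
The plan is to build $X^\cD_\emptyset$ as a quotient of the fine moduli scheme $X^\cD_I$ supplied by the preceding theorem by the finite group $\cD_I^\times$, and to read off every asserted property from the corresponding property of $X^\cD_I$. Fix $I\neq\emptyset$. Dropping the level structure defines a forgetful functor $\Ell^\cD_I(W)/\Z\to\Ell^\cD_\emptyset(W)/\Z$; the key point is that, for a fixed $\E$ admitting a level-$I$ structure, the set of all such structures is a torsor under $\cD_I^\times$ (acting by $\iota\mapsto\iota\circ g$), so that isomorphism classes of $\cD$-elliptic sheaves correspond exactly to $\cD_I^\times$-orbits of isomorphism classes carrying a level structure. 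Since $\cD_I^\times$ is finite and $X^\cD_I$ is quasi-projective, the geometric quotient $X^\cD_I/\cD_I^\times$ exists as a scheme over $C'$; I would define $X^\cD_\emptyset$ over $C'$ to be this quotient and then glue the quotients obtained from different $I$ (comparing them over a common refinement $I\cup I'$) to descend to a scheme over all of $C-R-\infty$. The coarse-moduli universal property then follows from that of the quotient together with the orbit description, the bijection on geometric points being precisely the statement that $(X^\cD_I/\cD_I^\times)(\bk)$ parametrizes $\bk$-isomorphism classes of $\cD$-elliptic sheaves modulo $\Z$.

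For the morphism $X^\cD_I\to X^\cD_\emptyset$ I would first observe that the scalars $\F_q^\times\hookrightarrow\cD_I^\times$ act trivially: for $\lambda\in\F_q^\times$ multiplication by $\lambda$ is an automorphism of every $\cD$-elliptic sheaf carrying $\iota$ to $\iota\circ\lambda$. Hence the effective action is that of $\cD_I^\times/\F_q^\times$, which acts generically freely because the generic automorphism group of a $\cD$-elliptic sheaf is $\F_q^\times$. Therefore the quotient map is finite of degree $\#(\cD_I^\times/\F_q^\times)=\#(\cD_I^\times)/(q-1)$. Flatness is then a matter of \emph{miracle flatness}: the map is a finite morphism between schemes that are both smooth over $C'$ of the same relative dimension (so the source is Cohen--Macaulay and the target regular), hence flat, and flatness forces the degree to be constant on the connected base.

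Smoothness of $X^\cD_\emptyset$ over $C-R-\infty$ is where relative dimension $1$ is crucial. A priori a finite quotient can acquire quotient singularities, but here each geometric fibre is the quotient of a smooth curve by a finite group, hence a normal curve, hence smooth; one checks that formation of the quotient is compatible with passage to fibres (using flatness of $X^\cD_I$ over $C'$ and that the base is regular of dimension one), so that smoothness holds relatively. Properness when $D$ is a division algebra descends along the finite surjection $X^\cD_I\to X^\cD_\emptyset$ from the properness of $X^\cD_I$ over $C'$.

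The main obstacle is the geometric irreducibility of the fibres, which is the one property not already visible for $X^\cD_I$. Since the fibres are smooth it suffices to prove they are geometrically connected, and here the gain comes precisely from quotienting by all of $\cD_I^\times$: the finite group acts on the set $\pi_0(X^\cD_I\otimes\bar F)$ of geometric components through the reduced norm $\Nr\colon\cD_I^\times\to\cO_I^\times$, and I would show this action is transitive, so that the quotient becomes geometrically connected over the generic point of $C-R-\infty$. Transitivity reduces, via surjectivity of $\Nr$ at the split places dividing $I$, to strong approximation for the norm-one subgroup $D^1$ of $D^\times$, which applies because $D$ is split --- hence isotropic --- at $\infty$. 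Connectedness of the remaining fibres then follows from that of the generic fibre by a standard specialization argument, and smooth plus geometrically connected yields geometrically irreducible.
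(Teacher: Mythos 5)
Your construction is the same as the paper's: define $X^\cD_\emptyset$ over $C'$ as the quotient $X^\cD_I/\cD_I^\times$, observe that the kernel of the action on isomorphism classes is the diagonally embedded $\F_q^\times$ (since level-$I$ structures on a fixed $\E$ form a $\cD_I^\times$-torsor and scalars act by automorphisms of $\E$), conclude that $X^\cD_I\to X^\cD_I/\cD_I^\times$ is finite flat of degree $\#(\cD_I^\times)/(q-1)$, and glue the quotients over varying auxiliary levels to cover all of $C-R-\infty$; your extra remarks on miracle flatness, smoothness of the fibrewise quotients, and descent of properness along the finite surjection fill in details the paper leaves implicit, and are correct. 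The one point where you genuinely diverge is the geometric irreducibility of the fibres: the paper simply cites \cite[Prop.\ 3.2]{PapMRL}, whereas you sketch the underlying argument --- $\pi_0$ of the geometric generic fibre of $X^\cD_I$ is $F^\times\bs(\A^\infty)^\times/\Nr(\cK_I^\infty)$ by strong approximation for $D^1$ (valid since $D$ splits at $\infty$), the $\cD_I^\times$-action on components factors through the reduced norm, and the orbit space is $F^\times\bs(\A^\infty)^\times/\Nr((\cD^\infty)^\times)\cong\Pic(A)=1$ because $\Nr(\cD_x^\times)=\cO_x^\times$ at every finite place and $A=\F_q[T]$ is principal. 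This is exactly the mechanism behind the cited proposition and is consistent with the double-coset description the paper itself uses in Section \ref{Sec4}, so your route is self-contained where the paper's is not; the only caveat is that a complete write-up would have to justify the identification of $\pi_0$ with that double coset space (e.g.\ via the rigid-analytic uniformization or the determinant map to a moduli space of rank-one objects), which is the nontrivial content being outsourced.
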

\begin{proof} The action of $\cD_I^\times$ on the stack $\Ell^\cD_I$
induces an action of $\cD_I^\times$ on the scheme $X^\cD_I$. The
quotient scheme $X^\cD_I/\cD_I^\times$ is the coarse moduli scheme
for $W\mapsto\Ell^{\cD}_\emptyset(W)/\Z$ over $C-I-R-\infty$. The
subgroup of $\cD_I^\times\cong \GL_2(\cO_I)$ which fixes all
isomorphism classes of objects in $\Ell^\cD_I(W)$ is $\F_q^\times$
embedded diagonally into $\GL_2(\cO_I)$. Hence the morphism
$X^\cD_I\to X^\cD_I/\cD_I^\times$ is finite flat of degree
$\#(\cD_I^\times)/(q-1)$. To extend $X^\cD_\emptyset$ over $I$, take
$J$ disjoint from $I$. The quotient $X^\cD_J/\cD_J^\times$ is
defined over $C-J-R-\infty$ and is isomorphic to
$X^\cD_I/\cD_I^\times$ over $C-J-I-R-\infty$. One obtains
$X^\cD_\emptyset$ by gluing these two schemes over $C-J-I-R-\infty$.
Finally, the geometric fibres of $$X^\cD_\emptyset\to C-R-\infty$$
are irreducible by \cite[Prop. 3.2]{PapMRL}.
\end{proof}

\begin{rem}\label{remDM}
Using the Morita equivalence and a result of Drinfeld, one can show
that when $D=\M_2$ the category $\Ell^{\cD}_{I}(W)/\Z$ is equivalent
to the category of rank-$2$ Drinfeld $A$-modules over $W$ equipped
with level $I$-structures, cf. \cite{Carayol}. Hence in that case
$X^\cD_I$ are the modular curves constructed in \cite{Drinfeld}.
\end{rem}

Assume $D$ is a division algebra. Denote by
$X^\cD_{I,F}:=X^\cD_{I}\times_{C'}\Spec(F)$ the generic fibre of
$X^\cD_I$. For a closed point $o$ of $C-I-R-\infty$ denote
$X^\cD_{I,o}:=X^\cD_{I}\times_{C'}\Spec(\F_o)$. Let $\bar{F}$ (resp.
$\overline{\F}_o$) be a fixed algebraic closure of $F$ (resp.
$\F_o$). Fix a prime number $\ell$ not equal to the characteristic
of $F$. Consider the $\ell$-adic cohomology groups
$$
H^i_{I,F}:=H^i(X^{\cD}_{I,F}\otimes_F \bar{F}, \overline{\Q}_\ell)
$$
and
$$
H^i_{I,o}:=H^i(X^{\cD}_{I,o}\otimes_{\F_o}\overline{\F}_o,
\overline{\Q}_\ell).
$$
These are finite dimensional $\overline{\Q}_\ell$-vector spaces
which are non-zero only if $0\leq i\leq 2$. The dimensions of these
spaces do not depend on the choice of $\ell$. As a consequence of
the proper base change theorem, there are canonical isomorphisms of
$\overline{\Q}_\ell$-vector spaces $H^i_{I,F}\cong H^i_{I,o}$,
$0\leq i\leq 2$, cf. \cite[p. 287]{LRS}. The
\textit{Euler-Poincar\'e characteristic} of $X^\cD_{I,F}$ is
$$
\chi(X^\cD_{I,F})=\sum_{i=0}^2
(-1)^i\dim_{\overline{\Q}_\ell}(H^i_{I,F}).
$$
Similarly, define
$$
\chi(X^\cD_{I,o})=\sum_{i=0}^2
(-1)^i\dim_{\overline{\Q}_\ell}(H^i_{I,o}).
$$
$\chi(X^\cD_{I,o})$ is independent of the choice of $o$, since it is
equal to $\chi(X^\cD_{I,F})$.

\section{$\cD$-elliptic sheaves of finite
characteristic}\label{Sec3}

Let $o\in |C|-R-\infty$ be fixed, and let $k$ be a fixed algebraic
closure of $\F_o$. A \textit{$\cD$-elliptic sheaf of characteristic
$o$ over $k$} is a $\cD$-elliptic sheaf $\E$ over $\Spec(k)$ such
that the zero is the canonical morphism
$$
z:\Spec(k)\to \Spec(\F_o)\hookrightarrow C.
$$

In \cite[Ch. 9]{LRS}, the authors, following Drinfeld, develop a
Honda-Tate type theory for $\cD$-elliptic sheaves. The basis of this
theory is a construction which attached to each $\cD$-elliptic sheaf
of characteristic $o$ over $k$ a $(D,\infty,o)$-type. A
\textit{$(D,\infty,o)$-type} is a pair $(\tf,\tp)$ having the
following properties, cf. \cite[(9.11)]{LRS}:
\begin{enumerate}
\item $\tf$ is a separable field extension of $F$ of degree dividing
$2$;
\item $\tp\in \tf^\times\otimes_\Z \Q$, but $\tp\not\in F^\times\otimes_\Z
\Q$ unless $\tf=F$;
\item $\infty$ does not split in $\tf$;
\item The valuations of $F$ naturally extend to the group $\tf^\times\otimes_\Z\Q$.
There are exactly two places $\tilde{x}=\tilde{\infty}$ and
$\tilde{x}=\tilde{o}$ of $\tf$ such that $\ord_{\tilde{x}}(\Pi)\neq
0$. The place $\tilde{\infty}$ is the unique place of $\tf$ over
$\infty$ and $\tilde{o}$ divides $o$. Moreover,
$$
\ord_{\tilde{\infty}}(\tp)\cdot \deg(\tilde{\infty})=-[\tf:F]/2.
$$
\item For each place $x$ of $F$ and each place $\tilde{x}$ of $\tf$
dividing $x$, we have
$$
(2[\tf_{\tilde{x}}:F_x]/[\tf:F])\cdot \inv_x(D)\in \Z.
$$
\item $h:=2[\tf_{\tilde{o}}:F_o]/[\tf:F]$ is an integer satisfying $1\leq h\leq
2$.
\end{enumerate}

\vspace{0.1in}

Next, using (1)-(6), we deduce some additional properties of $\tf$
depending on whether $h=1$ or $2$.

\begin{lem}\label{lem3.1}
If $h=1$, then $\tf$ is a separable quadratic extension of $F$ in
which $o$ splits and each $x\in R$ does not split. If $h=2$, then
$\tf=F$
\end{lem}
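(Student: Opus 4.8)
The plan is to read off the conclusions from properties (1)--(6) of a $(D,\infty,o)$-type, treating the cases $h=2$ and $h=1$ separately.

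First I would dispose of the case $h=2$. By property (6), $h=2[\tf_{\tilde{o}}:F_o]/[\tf:F]$. Since $[\tf:F]$ divides $2$ by property (1), the only way to get $h=2$ is to have $[\tf:F]=2$ and $[\tf_{\tilde{o}}:F_o]=2$, or $[\tf:F]=1$. But if $[\tf:F]=2$ and $\tilde{o}$ is a place of $\tf$ with local degree $2$ over $o$, then $\tilde{o}$ is the unique place above $o$, so $o$ is inert in $\tf$. I would then examine property (5) at the place $o$ with this $\tilde{o}$: the quantity $(2[\tf_{\tilde{o}}:F_o]/[\tf:F])\cdot\inv_o(D)=2\cdot\inv_o(D)$ must be an integer, which it automatically is since $o\notin R$ forces $\inv_o(D)=0$, so this gives no contradiction by itself. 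The cleaner route is to observe that property (4) pins down $\ord_{\tilde{\infty}}(\tp)\cdot\deg(\tilde{\infty})=-[\tf:F]/2$; combined with property (3) that $\infty$ does not split, one analyzes the possible splitting behavior at $\infty$. In fact the expected conclusion is that $h=2$ forces $\tf=F$, and I would verify this by showing that the $h=2$, $[\tf:F]=2$ scenario is incompatible with the remaining constraints, most likely via property (2) or the normalization in (4), leaving $\tf=F$ as the only possibility.

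Next I would treat $h=1$. Here $h=2[\tf_{\tilde{o}}:F_o]/[\tf:F]=1$ forces $[\tf:F]=2$ and $[\tf_{\tilde{o}}:F_o]=1$, so $\tf$ is a separable quadratic extension of $F$ (separability from property (1)) and $o$ has a degree-one place $\tilde{o}$ above it. Since $[\tf:F]=2$, either $o$ splits or $o$ is inert (ramification would need to be checked, but as $\tf/F$ is separable and $\tilde{o}$ has residue/local degree $1$, the local degree being $1$ means $\tf_{\tilde{o}}=F_o$, hence $o$ splits). To control the places in $R$, I would apply property (5) at each $x\in R$: writing $d_x=[\tf_{\tilde{x}}:F_x]$ for a place $\tilde{x}\mid x$, condition (5) reads $(2d_x/2)\cdot\inv_x(D)=d_x\cdot\inv_x(D)\in\Z$. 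Since $D$ is a quaternion division algebra ramified at $x\in R$, we have $\inv_x(D)=1/2$, so we need $d_x/2\in\Z$, i.e. $d_x$ is even, forcing $d_x=2$. Thus $x$ is nonsplit (inert or ramified) in $\tf$ for every $x\in R$.

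The main obstacle I anticipate is the $h=2$ case, specifically ruling out a nontrivial quadratic $\tf$ in which $o$ is inert. The constraints in (1)--(6) are somewhat intertwined, and the decisive restriction likely comes from combining property (4)'s normalization of $\ord_{\tilde{\infty}}(\tp)$ with property (3) (nonsplitting at $\infty$) and possibly property (2). I would need to check carefully that no separable quadratic extension can simultaneously satisfy $[\tf_{\tilde{o}}:F_o]=2$ and the $\infty$-conditions, which is the delicate point; the $h=1$ analysis, by contrast, is a direct computation using $\inv_x(D)=1/2$ at ramified places.
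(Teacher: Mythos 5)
Your $h=1$ analysis is correct and coincides with the paper's: the identity $1=2[\tf_{\tilde{o}}:F_o]/[\tf:F]$ forces $[\tf:F]=2$ and $[\tf_{\tilde{o}}:F_o]=1$ (so $o$ splits), and condition (5) at $x\in R$ with $\inv_x(D)=1/2$ forces $[\tf_{\tilde{x}}:F_x]=2$, so each $x\in R$ is nonsplit. That half is fine.

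The $h=2$ case, however, is a genuine gap, and you have flagged it yourself: you never actually rule out a separable quadratic $\tf$ in which $o$ is inert (or ramified), and the strategies you sketch would not do it. A purely local analysis of splitting behavior at $\infty$ and $o$ cannot work, because conditions (3)--(6) are perfectly consistent with $[\tf:F]=2$ and $[\tf_{\tilde{o}}:F_o]=2$; the contradiction is global and hinges on condition (2). The paper's argument is the following. Suppose $\tf\neq F$, let $g$ generate $\Gal(\tf/F)$, and pick $N\neq 0$ with $\tp^N\in\tf^\times$; by condition (2), $F[\tp^N]=\tf$. By condition (4) the divisor of $\tp^N$ is supported only at $\tilde{\infty}$ and $\tilde{o}$, and both of these places are fixed by $g$ (neither $\infty$ nor $o$ splits, by (3) and the inertness assumption). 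Hence $\tp^N/g(\tp^N)$ has valuation zero at every place of $\tf$, so it is a root of unity; replacing $N$ by a suitable multiple $M$ gives $g(\tp^M)=\tp^M$, so $\tp^M\in F^\times$ and $\tf=F[\tp^M]=F$, contradicting $\tf\neq F$. This ``divisor supported at Galois-fixed places $\Rightarrow$ conjugate ratio is a root of unity'' step is the missing idea; without it, the case of a quadratic $\tf$ inert at $o$ survives all the constraints you invoke.
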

\begin{proof}
Let $h=1$. Since $1=2[\tf_{\tilde{o}}:F_o]/[\tf:F]$ and $[\tf:F]\leq
2$, we must have $[\tf:F]=2$ and $[\tf_{\tilde{o}}:F_o]=1$. From
$[\tf_{\tilde{x}}:F_x]\cdot \inv_x(D)\in \Z$, we conclude that
$[\tf_{\tilde{x}}:F_x]=2$ for every $x\in R$.

Now let $h=2$. Suppose $\tf\neq F$. Since
$2=2[\tf_{\tilde{o}}:F_o]/[\tf:F]$, we must have
$[\tf_{\tilde{o}}:F_o]=2$. Hence $\tilde{o}$ is the unique place of
$\tf$ over $o$. Let $g$ be the generator of $\Gal(\tf/F)$. Let $N$
be a non-zero integer such that $\tp^N\in \tf$. Consider
$F':=F[\tp^N]$. Since $\tp\in F'^\times\otimes \Q$, we must have
$F'=\tf$. Since $\tp^N$ has non-zero valuations only at $\tilde{o}$
and $\tilde{\infty}$, and $g$ fixes both places as neither of them
splits, $\tp^N/g(\tp^N)$ has zero valuation at all places of $\tf$.
This implies that $\tp^N/g(\tp^N)$ is a root of unity, so there is
an integer $M\geq N$ such that $\tp^M/g(\tp^M)=1$. By the same
argument as above, $\tf=F[\tp^M]$. But now $g$ fixes $\tp^M$, so
$\tf=F$, which is a contradiction.
\end{proof}

\begin{defn} Following the terminology for
elliptic curves, we call a $\cD$-elliptic sheaf of characteristic
$o$ over $k$ \textit{ordinary} or \textit{supersingular} depending
on whether its $(D,\infty,o)$-type has $h=1$ or $h=2$, respectively.
Denote by $\fS^\cD_o\subset X^\cD_{\emptyset, o}(k)$ the subset of
closed points corresponding to supersingular $\cD$-elliptic sheaves.
\end{defn}

To an $\E$ with $(D,\infty,o)$-type $(\tf,\tp)$ there is a
canonically associated central division algebra $\End^0(\E)$ over
$\tf$, called the \textit{endomorphism algebra of $\E$}; see
\cite[(9.10)]{LRS}. There is also a natural notion of
\textit{endomorphism ring} $\End(\E)$ of $\E$ \cite{Papikian}, which
is an $A$-order in $\End^0(\E)$.

As is proven in \cite{LRS}, if $\E$ is ordinary, then
$\End^0(\E)=\tf$. If $\E$ is supersingular, then $\End^0(\E)$ is the
quaternion algebra $\bD$ over $F$ with invariants
$$
\inv_{x}\bD=\left\{
               \begin{array}{ll}
                 1/2, & \hbox{if $x=\infty$;} \\
                 1/2, & \hbox{if $x=o$;} \\
                 \inv_x(D), & \hbox{otherwise.}
               \end{array}
             \right.
$$

\begin{defn}
Let $\Aut(\E)$ be the set of all non-zero elements of $\End(\E)$
which are algebraic over $\F_q$ (cf. \cite[$\S$6]{Papikian}).
\end{defn}

\begin{lem}
The elements of $\Aut(\E)$ form a group isomorphic to either
$\F_q^\times$ or $\F_{q^2}^\times$.
\end{lem}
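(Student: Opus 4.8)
The plan is to realize $\Aut(\E)$ as the group of roots of unity in the division algebra $B:=\End^0(\E)$, intersected with the order $\End(\E)$, and to control this group through the local structure of $B$ at a ramified place. First I would record that a nonzero $\alpha\in\End(\E)$ is algebraic over $\F_q$ if and only if it is a root of unity: if $\alpha$ is algebraic over $\F_q$ then $\F_q[\alpha]$ is a finite integral domain, hence a finite field by Wedderburn's little theorem, so $\alpha$ lies in $\F_q[\alpha]^\times$ and is torsion; the converse is clear. Moreover $B$ has no nontrivial $p$-torsion, since $\alpha^{p^k}=1$ gives $(\alpha-1)^{p^k}=0$ and hence $\alpha=1$ in the division ring $B$. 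Thus $\Aut(\E)$ is precisely the set of roots of unity of $B$ contained in $\End(\E)$, and it suffices to prove that the group $\mu(B)$ of all roots of unity of $B$ equals $\F_{q^e}^\times$ for some $e\in\{1,2\}$; intersecting the resulting finite field $\F_{q^e}=\mu(B)\cup\{0\}$ with the $\F_q$-algebra $\End(\E)$ then yields a subfield, whose unit group is $\Aut(\E)$.

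When $\E$ is ordinary, Lemma \ref{lem3.1} gives $B=\tf$, a separable quadratic extension of $F$, so $\mu(B)\cup\{0\}$ is the field of constants of $\tf$; since the exact constant field $\F_{q^e}$ of a function field satisfies $e=[\F_{q^e}F:F]\mid[\tf:F]=2$, we get $e\in\{1,2\}$, as required. The substantive case is the supersingular one, where $B=\bD$ is the quaternion division algebra over $F$, ramified at $\infty$ because $\inv_\infty\bD=1/2$. Here I would pass to the completion $\bD_\infty:=\bD\otimes_F F_\infty$, which is the quaternion \emph{division} algebra over $F_\infty$. The embedding $\bD\hookrightarrow\bD_\infty$ carries every root of unity into the unique maximal order $\cO$ of $\bD_\infty$ as a unit, since roots of unity are integral over $\cO_\infty$ and have trivial valuation.

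The key step, which I expect to be the main obstacle, is to show that the roots of unity of $\bD$ pairwise commute and lie in a single finite field. For this I would use the reduction homomorphism $r\colon\cO^\times\to(\cO/\cP)^\times$ modulo the maximal ideal $\cP$ of $\cO$. The residue ring $\cO/\cP$ is the residue division algebra of $\bD_\infty$, namely the unramified quadratic extension of the residue field $\F_q$ of $F_\infty$, hence the \emph{commutative} field $\F_{q^2}$. The kernel $1+\cP$ of $r$ is a pro-$p$ group, so it has no nontrivial prime-to-$p$ torsion; combined with the absence of $p$-torsion noted above, $r$ is injective on $\mu(\bD)$. Therefore $\mu(\bD)$ embeds into the cyclic group $\F_{q^2}^\times$, so it is a finite abelian group. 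Consequently $\mu(\bD)$ is a finite subgroup of $\bD^\times$, the $\F_q$-subalgebra it generates is a finite integral domain and hence a finite field $\F_{q^e}$, and $\mu(\bD)=\F_{q^e}^\times$; the embedding into $\F_{q^2}^\times$ forces $q^e-1\mid q^2-1$, i.e. $e\in\{1,2\}$. Intersecting $\F_{q^e}=\mu(\bD)\cup\{0\}$ with the ring $\End(\E)$ gives a field $\F_{q^{e'}}$ with $\F_q\subseteq\F_{q^{e'}}\subseteq\F_{q^2}$, so that $\Aut(\E)=\F_{q^{e'}}^\times$ with $e'\in\{1,2\}$, as claimed.
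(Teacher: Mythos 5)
Your argument is correct, but it is genuinely different from what the paper does: the paper dismisses the ordinary case as obvious and simply cites \cite{Papikian} for the supersingular case, whereas you give a self-contained proof. Your route --- identify $\Aut(\E)$ with the roots of unity of $\End^0(\E)$ lying in $\End(\E)$, handle the ordinary case via the constant field of the quadratic extension $\tf$, and in the supersingular case exploit the ramification of $\bD$ at $\infty$ by reducing the unique maximal order of $\bD_\infty$ modulo its maximal ideal to land in the commutative residue field $\F_{q^2}$ --- is a clean local argument, and it buys independence from the external reference at the cost of some bookkeeping. One step is compressed and should be spelled out: injectivity of the reduction map $r$ on the \emph{set} of roots of unity does not by itself let you ``embed'' $\mu(\bD)$ into $\F_{q^2}^\times$, since a priori $\mu(\bD)$ need not be a group in a noncommutative division algebra. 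The missing line is the conjugation trick: for roots of unity $\alpha,\beta$, the element $\alpha\beta\alpha^{-1}$ is again a root of unity with $r(\alpha\beta\alpha^{-1})=r(\beta)$ because the target is commutative, so injectivity of $r$ on torsion forces $\alpha\beta\alpha^{-1}=\beta$; only then do the roots of unity pairwise commute, form a group, and generate a finite subfield. With that sentence added, the proof is complete; the remaining steps (no $p$-torsion in a division ring of characteristic $p$, $1+\cP$ pro-$p$, $q^e-1\mid q^2-1$ forcing $e\mid 2$, and intersecting the resulting field with the $A$-order $\End(\E)$, which contains $\F_q$) are all sound.
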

\begin{proof}
The statement is obvious when $\E$ is ordinary. If $\E$ is
supersingular, then this follows from \cite{Papikian}.
\end{proof}

\begin{defn}
We say that \textit{$\E$ has extra automorphisms} if $\Aut(\E)\cong
\F_{q^2}^\times$.
\end{defn}

To proceed further, we need to recall some standard facts about
maximal orders in quaternion algebras. The definitions and the
proofs can be found in \cite{Vigneras}. Fix some maximal $A$-order
$\cO$ in $\bD$. Let $\cS:=\{I_1,\dots, I_m\}$ be the right ideal
classes of $\cO$, and let $\cO_i:=O_\ell(I_i)$ be the left order of
$I_i$, $1\leq i\leq m$. The number $\fc(\bD, \infty):=m$, called the
\textit{class number} of $\bD$ with respect to $\infty$, does not
depend on the choice of $\cO$. Moreover, the orders $\cO_i$, $1\leq
i\leq m$, are maximal, and all conjugacy classes of maximal
$A$-orders in $\bD$ appear in $\{\cO_1,\dots, \cO_m\}$. The
\textit{mass of $\bD$} with respect to $\infty$ is
$$
\fm(\bD,\infty):=(q-1)\sum_{i=1}^m (\#\cO_i^\times)^{-1}.
$$
Finally, the two-sided ideals of $\cO$ form a commutative group
generated by the ideals of $A$ and ideals $\Pi_x$, $x\in R\cup o$,
such that $\Pi_x^2=\fp_x$ (see \cite[pp. 86-87]{Vigneras}).

\vspace{0.1in}

For a finite set $S\subset |C|$, let
$$
\wp(S)=\left\{
         \begin{array}{ll}
           0, & \hbox{if some place in $S$ has even degree;} \\
           1, & \hbox{otherwise.}
         \end{array}
       \right.
$$
In particular, $\wp(\emptyset)=1$.
\begin{thm}\label{thmDvG}
With above notation,
$$
\fm(\bD,\infty)=\frac{1}{q^2-1}\prod_{x\in R\cup o}(q_x-1)
$$
and
$$
\fc(\bD,\infty)=\frac{1}{q^2-1}\prod_{x\in R\cup
o}(q_x-1)+\frac{q}{q+1}\cdot 2^{\# R}\cdot \wp(R\cup o).
$$
\end{thm}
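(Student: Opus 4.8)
\emph{The mass formula.} The plan is to identify $\fm(\bD,\infty)$ with the value delivered by the function field analogue of Eichler's mass formula for the genus of maximal $A$-orders in $\bD$. Since $\bD$ is ramified at $\infty$, this is the ``definite'' case (where strong approximation at $\infty$ fails, which is precisely why the class number is nontrivial), so I would compute the adelic volume of $\bD^\times(\A)/\bD^\times(F)$ against the local volumes of the maximal compact subgroups $\cD_x^\times\subset\bD_x^\times$. The only analytic input needed is the zeta function of $A=\F_q[T]$, namely $\zeta_A(s)=(1-q^{1-s})^{-1}$, so that $|\zeta_A(-1)|=(q^2-1)^{-1}$; the local factors at the split places cancel against $\zeta_A$, while at each finite ramified place $x\in R\cup o$ the division structure of $\bD_x$ produces an extra factor $(q_x-1)$. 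The normalizing factor $(q-1)$ in the definition of $\fm$ matches $A^\times=\F_q^\times$, and one lands on $\fm(\bD,\infty)=|\zeta_A(-1)|\prod_{x\in R\cup o}(q_x-1)$, which is the asserted value.

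\emph{Reduction of the class number to a units count.} By the lemma computing $\Aut(\E)$, each $\cO_i^\times$ is either $\F_q^\times$ or $\F_{q^2}^\times$. Let $e$ denote the number of indices $i$ with $\cO_i^\times\cong\F_{q^2}^\times$. Splitting the defining sum of the mass according to this dichotomy gives
$$
\fm(\bD,\infty)=(\fc(\bD,\infty)-e)+\frac{e}{q+1},\qquad\text{hence}\qquad \fc(\bD,\infty)=\fm(\bD,\infty)+\frac{q}{q+1}\,e.
$$
Comparing with the target formula, it remains exactly to prove that $e=2^{\#R}\cdot\wp(R\cup o)$.

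\emph{Counting $e$.} The key observation is that $\cO_i^\times\cong\F_{q^2}^\times$ if and only if $\cO_i$ contains a copy of $\F_{q^2}$, equivalently the constant field extension $K:=\F_{q^2}(T)$ embeds in $\bD$ as an $F$-algebra. A place $x$ splits in $K$ precisely when $\deg(x)$ is even and is never ramified, so $K\hookrightarrow\bD$ holds if and only if no ramified place of $\bD$ splits in $K$; as $\infty$ has degree $1$, this is exactly the condition that every $x\in R\cup o$ have odd degree, i.e. $\wp(R\cup o)=1$. This settles the case $\wp(R\cup o)=0$, where $e=0$. When $\wp(R\cup o)=1$, I would count the conjugacy classes of maximal orders containing $\F_{q^2}$ through Eichler's theory of optimal embeddings of the maximal order $\cO_K=\F_{q^2}[T]$, whose class number is $h(\cO_K)=1$ since $\F_{q^2}[T]$ is a PID with a single place over $\infty$. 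The weighted count of optimal embeddings is then purely local, and the two-sided ideals $\Pi_x$, $x\in R\cup o$, together with the fact that an order with $\cO_i^\times=\F_{q^2}^\times$ carries exactly two embeddings $\iota,\bar\iota$ that are Frobenius-conjugate but not $\cO_i^\times$-conjugate, convert this count into $e=2^{\#R}$.

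The main obstacle is the last evaluation $e=2^{\#R}$. The delicate point is to pin down the correct power of two: one must compute the local embedding numbers of the unramified quadratic order into the maximal orders of the local division algebras at the places of $R\cup o\cup\{\infty\}$, and correctly account for the asymmetry between the generic ramified places $R$ and the two distinguished places (the pole $\infty$ and the zero $o$), as well as the normalizer action of the two-sided ideal classes $\Pi_x$ with $\Pi_x^2=\fp_x$. Getting exactly $2^{\#R}$ rather than $2^{\#R+1}$ or $2^{\#(R\cup o)}$ is where all the bookkeeping concentrates, and I would verify it by a careful local-global analysis using the structure of the two-sided ideal group described above.
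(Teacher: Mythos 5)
Your proposal is correct in outline, but it necessarily takes a different route from the paper: the paper does not prove Theorem \ref{thmDvG} at all, it simply cites formulas (1), (4) and (6) of Denert--Van Geel \cite{DvG}, who established the mass and class number formulas for (hereditary) orders in totally definite quaternion algebras over $\F_q(T)$. What you have written is essentially a reconstruction of the standard Eichler-style argument behind that reference: the adelic mass computation via $\zeta_A(-1)=-1/(q^2-1)$ with local factors $(q_x-1)$ at the finite ramified places $R\cup o$ of $\bD$, the exact identity $\fc(\bD,\infty)=\fm(\bD,\infty)+\frac{q}{q+1}e$ coming from the dichotomy $\cO_i^\times\cong\F_q^\times$ or $\F_{q^2}^\times$, and the evaluation of $e$ by counting optimal embeddings of $\F_{q^2}[T]$. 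On the point you flag as the main obstacle, the bookkeeping does come out as you hope: the trace formula gives $\sum_{i=1}^m m_i(\F_{q^2}[T],\cO_i)=h(\F_{q^2}[T])\prod_{x\in R\cup o}\bigl(1-\bigl(\frac{\F_{q^2}F}{x}\bigr)\bigr)$, where the product runs only over the \emph{finite} ramified places ($\infty$ enters solely through the requirement that it not split in $\F_{q^2}F$, which holds since $\deg(\infty)=1$); this equals $2^{\#R+1}\wp(R\cup o)$ since $h(\F_{q^2}[T])=1$, and since an order with $\cO_i^\times=\F_{q^2}^\times$ contains a unique copy of $\F_{q^2}$ (namely $\cO_i^\times\cup\{0\}$) admitting exactly two embeddings that are not identified by conjugation by the commutative unit group, one gets $e=\frac{1}{2}\cdot 2^{\#R+1}\wp(R\cup o)=2^{\#R}\wp(R\cup o)$, as required. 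So your sketch, once the local embedding numbers at the ramified places are computed (each contributes $1-\bigl(\frac{\F_{q^2}F}{x}\bigr)$, i.e.\ $2$ in the inert case and $0$ in the split case), is a complete and self-contained proof, whereas the paper's buys brevity at the cost of sending the reader to \cite{DvG}.
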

\begin{proof}
See (1), (4) and (6) in \cite{DvG}.
\end{proof}

\begin{thm}\label{thmPapLRS} With above notation, there exists a bijection
$$
\fS^\cD_o\overset{\sim}{\To} \cS
$$
such that if $\E_j\in \fS^\cD_o$ corresponds to $I_j\in \cS$ then
$\End(\E_j)\cong O_\ell(I_j)$. The Galois group $\Gal(k/\F_o)$
preserves the set $\fS^\cD_o$. Moreover, the action of the geometric
Frobenius $\Frob_o$ on $\fS^\cD_o$ corresponds to the action of
$\Pi_o$ on $\cS$ given by right multiplication $I_j\mapsto
I_j\Pi_o$.
\end{thm}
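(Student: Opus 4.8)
The plan is to establish the function-field analogue of Deuring's correspondence between supersingular elliptic curves and right ideal classes of a definite quaternion order, using the Honda--Tate theory of \cite[Ch.~9]{LRS} together with the endomorphism theory of \cite{Papikian}. By Lemma~\ref{lem3.1} every supersingular $\E$ has $h=2$ and hence $(D,\infty,o)$-type $(F,\tp)$, so by the Honda--Tate classification of \cite[Ch.~9]{LRS} all elements of $\fS^\cD_o$ lie in a single isogeny class whose common endomorphism algebra is $\bD$. I would fix one base point $\E_0\in\fS^\cD_o$ whose endomorphism ring is a maximal $A$-order and identify $\End(\E_0)$ with the chosen $\cO$; the existence of such an $\E_0$, and the fact that every conjugacy class of maximal orders is realized as some $\End(\E_j)$, follows from the local description of $\cD_x$-stable lattices in $\bD$ and is consistent with the class number computed in Theorem~\ref{thmDvG}.

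The bijection itself I would construct via the functor $\E\mapsto \Hom(\E_0,\E)$. Choosing a quasi-isogeny $\E_0\to\E$ identifies $\End^0(\E)$ with $\bD$ and realizes $\Hom(\E_0,\E)$ as a fractional right ideal of $\cO$ whose left order is exactly $\End(\E)$, which already yields the stated isomorphism $\End(\E_j)\cong O_\ell(I_j)$. Replacing the quasi-isogeny, or replacing $\E$ by an isomorphic sheaf, changes this ideal only by left multiplication, so the assignment descends to a well-defined map from isomorphism classes in $\fS^\cD_o$ to the right ideal class set $\cS$. To prove bijectivity I would work place by place: giving a right $\cO$-ideal amounts to giving a $\cD_x$-stable lattice at every $x$ in the local realizations of $\E_0$, and reconstructing a supersingular sheaf from such lattice data, via the Tate theorem of \cite[Ch.~9]{LRS}, furnishes the inverse.

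The Galois statement is the heart of the matter. That $\Gal(k/\F_o)$ preserves $\fS^\cD_o$ is clear, since $X^\cD_{\emptyset,o}$ is an $\F_o$-scheme and supersingularity depends only on the Galois-stable $(D,\infty,o)$-type. To identify the geometric Frobenius with $I_j\mapsto I_j\Pi_o$ I would compute the effect of $\Frob_o$ on the ideal $\Hom(\E_0,\E)$: comparing the conjugate sheaf with the relative Frobenius isogeny of $\E$, whose characteristic support is concentrated at $o$, twists the associated ideal by the unique two-sided prime of $\cO$ above $o$, which is precisely $\Pi_o$ by the structure of two-sided ideals recalled before Theorem~\ref{thmDvG}. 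I expect this last step to be the main obstacle, since one must carefully pin down the side on which $\Pi_o$ acts and reconcile the normalizations of geometric versus arithmetic Frobenius. As a consistency check, $\Pi_o^2=\fp_o$ is a principal ideal of $A=\F_q[T]$, so $\Frob_o^2$ acts trivially on $\cS$, recovering the expected fact that every supersingular point is rational over $\F_o^{(2)}$.
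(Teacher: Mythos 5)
Your sketch is mathematically the right skeleton, but it is worth saying up front that the paper does not actually prove this theorem: its ``proof'' consists of three citations. The bijection $\fS^\cD_o\overset{\sim}{\To}\cS$ with $\End(\E_j)\cong O_\ell(I_j)$ is deferred entirely to \cite[\S 7]{Papikian} (with the matrix-algebra case credited to Gekeler \cite{GekelerFDM}), the Frobenius statement is deferred to \cite[\S 10]{LRS}, and the only argument given in the paper itself is the preliminary observation that $I_j\Pi_o$ is a well-defined right $\cO$-ideal because $O_r(I_j)=\cO=O_\ell(\Pi_o)$. What you have written is essentially a reconstruction of what those references do: the Deuring-style functor $\E\mapsto\Hom(\E_0,\E)$, the identification of the left order with $\End(\E)$, local-to-global reconstruction for surjectivity, and the Frobenius isogeny twisting the ideal by the two-sided prime above $o$. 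So your route agrees with the route of the cited sources rather than with anything carried out in this paper.

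Two points where your sketch glosses over real work. First, a small misattribution: supersingularity is \emph{defined} by $h=2$; Lemma~\ref{lem3.1} is then what forces $\tf=F$ and hence $\End^0(\E)=\bD$, so the logic runs in the opposite direction from the way you phrased it. Second, and more substantively, for the correspondence to land in the right ideal \emph{class set of a maximal order} you need to know that $\End(\E)$ is a maximal $A$-order in $\bD$ for \emph{every} supersingular $\E$ (not just for a well-chosen base point), and that $\Hom(\E_0,\E)$ is a locally principal right $\cO$-ideal; neither is formal, and both are precisely the content of \cite[\S 7]{Papikian}. Your surjectivity argument (``reconstructing a supersingular sheaf from lattice data via the Tate theorem'') and your consistency check $\Pi_o^2=\fp_o$ principal, hence $\Frob_o^2$ trivial on $\cS$, are both correct and the latter is exactly how Corollary~\ref{thmSS} is deduced in the paper. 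You correctly flag the arithmetic-versus-geometric Frobenius normalization as the delicate point; that is indeed where \cite[p.~274]{LRS} has to be consulted, and a blind argument could easily land on left multiplication or on $\Pi_o^{-1}$ instead.
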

\begin{proof} First note
that since $O_r(I_j)=O_\ell(\Pi_o)$ the product $I_j\Pi_o$ makes
sense, and it is a right ideal of $\cO$, cf. \cite[p. 22]{Vigneras}.
For the proof of the first statement of the theorem see
\cite[$\S$7]{Papikian}. (When $D$ is the matrix algebra, i.e., in
the case of Drinfeld modules, this is due to Gekeler \cite[Thm.
4.3]{GekelerFDM}.) The second statement can be deduced from the
discussion in \cite[$\S$10]{LRS} (see p. 274, in particular).
\end{proof}

\begin{cor}\label{thmSS}
All points in $\fS^\cD_o$ are rational over $\F_o^{(2)}$, and
$$
\# \fS^\cD_o=\frac{1}{q^2-1}\prod_{x\in R\cup
o}(q_x-1)+\frac{q}{q+1}\cdot 2^{\# R}\cdot \wp(R\cup o).
$$
\end{cor}
\begin{proof} The formula for $\# \fS^\cD_o$ is an immediate consequence of
Theorems \ref{thmDvG} and \ref{thmPapLRS}. For the first statement,
note that the action of $\Frob_o^2$ on $\fS^\cD_o$ corresponds to
the action of $\fp_o$ on the ideal classes $\cS$. But $\fp_o$ is a
principal ideal of $A$ since $\Pic(A)=1$, so it acts trivially on
the ideal classes.
\end{proof}

\begin{cor}\label{thmEP}
The number of points on $X^\cD_{\emptyset, o}(k)$ corresponding to
$\cD$-elliptic sheaves with extra automorphisms is $2^{\# R}\cdot
\wp(R)$. Moreover, when $\wp(R)\neq 0$, all $\cD$-elliptic sheaves
with extra automorphisms are ordinary (resp. supersingular) if
$\deg(o)$ is even (resp. odd).
\end{cor}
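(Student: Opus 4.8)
The plan is to translate ``extra automorphisms'' into a statement about $\End^0(\E)$ and then count separately in the ordinary and supersingular loci, finally adding the two contributions. Since $\Aut(\E)$ consists of the nonzero elements of $\End(\E)$ that are algebraic over $\F_q$, having extra automorphisms means $\F_{q^2}\hookrightarrow\End^0(\E)$, i.e. the field of constants of $\End^0(\E)$ is $\F_{q^2}$. If $\E$ is ordinary then $\End^0(\E)=\tf$ is a field of degree $2$ over $F$, so this forces $\tf=\F_{q^2}(T)$, the constant-field extension; by Lemma \ref{lem3.1} such a type occurs only when $o$ splits in $\F_{q^2}(T)$ and every $x\in R$ is inert, that is, only when $\deg(o)$ is even and $\wp(R)=1$. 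If $\E$ is supersingular then $\End^0(\E)=\bD$ and extra automorphisms mean $\F_{q^2}(T)\hookrightarrow\bD$; since a quadratic field embeds in $\bD$ precisely when no ramified place of $\bD$ splits in it, and a place is inert in $\F_{q^2}(T)$ iff its degree is odd, this happens iff every place in $R\cup\{o,\infty\}$ has odd degree, i.e. $\wp(R\cup o)=1$ (note $\deg\infty=1$, so $\infty$ imposes no condition).

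For the supersingular count I would use Theorem \ref{thmPapLRS}: the bijection $\fS^\cD_o\overset{\sim}{\To}\cS$ identifies $\End(\E_j)$ with $\cO_j=O_\ell(I_j)$, and since $\bD$ is ramified at $\infty$ every $\cO_j^\times$ is finite and coincides with its group of algebraic elements, so $\Aut(\E_j)=\cO_j^\times$ is $\F_q^\times$ or $\F_{q^2}^\times$. Writing $b$ for the number of indices $j$ with $\#\cO_j^\times=q^2-1$ (these are exactly the supersingular points with extra automorphisms), the mass of Theorem \ref{thmDvG} reads $\fm(\bD,\infty)=(\fc(\bD,\infty)-b)+b/(q+1)$, whence $\fc(\bD,\infty)-\fm(\bD,\infty)=b\,q/(q+1)$. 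Comparing with the explicit difference $\tfrac{q}{q+1}2^{\#R}\wp(R\cup o)$ of the two formulas in Theorem \ref{thmDvG} gives $b=2^{\#R}\wp(R\cup o)$.

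For the ordinary count I would enumerate, via the Honda--Tate description of \cite[Ch. 9]{LRS}, the ordinary $\cD$-elliptic sheaves of characteristic $o$ with $\tf=\F_{q^2}(T)$. The endomorphism order lies in $\tf$, whose relevant ring of integers $\F_{q^2}[T]$ is a principal ideal domain, so there is no contribution from a class group of $\tf$; the multiplicity comes instead from the local $\cD_x$-lattice carrying the $\F_{q^2}$-action at the ramified places. At each $x\in R$ the algebra $D_x$ is the division quaternion algebra and $\tf_x$ is its unramified quadratic subfield, and the two local lattice classes (interchanged by the two-sided ideal $\Pi_x$ with $\Pi_x^2=\fp_x$) produce a factor $2$, while $o$ and the split places contribute $1$. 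Thus the number of ordinary points with extra automorphisms is $2^{\#R}$ when $\deg(o)$ is even and $\wp(R)=1$, and $0$ otherwise; compactly, it equals $2^{\#R}\bigl(\wp(R)-\wp(R\cup o)\bigr)$.

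Adding the two contributions yields $2^{\#R}\wp(R\cup o)+2^{\#R}\bigl(\wp(R)-\wp(R\cup o)\bigr)=2^{\#R}\wp(R)$, as claimed. The ``moreover'' part then reads off from the individual counts: when $\deg(o)$ is odd we have $\wp(R\cup o)=\wp(R)$, so the ordinary count vanishes and all extra-automorphism points are supersingular; when $\deg(o)$ is even we have $\wp(R\cup o)=0$, so the supersingular count vanishes and all such points are ordinary. The main obstacle is the ordinary count of the third paragraph: extracting the precise factor $2^{\#R}$ requires making the Honda--Tate parametrization explicit and computing the local optimal-embedding (equivalently, local lattice-class) numbers at the places of $R$, whereas the supersingular count falls out cleanly from the mass formula already recorded in Theorem \ref{thmDvG}.
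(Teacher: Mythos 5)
Your supersingular count is exactly the paper's: the bijection of Theorem \ref{thmPapLRS} plus the mass/class-number formulas of Theorem \ref{thmDvG} give $b=2^{\#R}\wp(R\cup o)$ supersingular points with extra automorphisms, and your reduction of ``extra automorphisms'' to $\F_{q^2}\hookrightarrow\End^0(\E)$, hence $\tf=\F_{q^2}F$ in the ordinary case with the splitting conditions from Lemma \ref{lem3.1}, is also how the paper begins. The divergence, and the genuine gap, is your third paragraph. You assert that the ordinary locus contributes exactly $2^{\#R}$ points when $\deg(o)$ is even and $\wp(R)=1$, justified by ``two local lattice classes at each $x\in R$ produce a factor $2$.'' But the paper supplies no analogue of Theorem \ref{thmPapLRS} for the ordinary locus: there is no stated parametrization of the isomorphism classes inside an ordinary isogeny class by ideal classes together with local embedding data, and no Eichler-type optimal embedding formula is proved or cited. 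You acknowledge this is ``the main obstacle,'' but that is precisely the step that carries the content of the corollary in the even-degree case, so as written the argument is incomplete. (The heuristic itself is plausible --- note in its favor that any $A$-order in $\F_{q^2}F$ containing $\F_{q^2}$ already contains the maximal order $\F_{q^2}[T]$, which is principal, so only one order can contribute --- but plausibility by analogy with Shimura curves is not a proof from the results at hand.)

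The paper avoids this count entirely. Having shown that $w=2^{\#R}\wp(R\cup o)$ whenever the extra-automorphism points are forced to be supersingular (in particular $w=2^{\#R}\wp(R)$ for every $o$ of odd degree, and $w=0$ if some place of $R$ has even degree), it observes that for a division algebra $D$ the ramification of $\pi:X^\cD_{I,o}\to X^\cD_{\emptyset,o}$ is tame of index $q+1$ exactly at the extra-automorphism points, so Riemann--Hurwitz gives $\chi(X^\cD_{I,o})=\deg(\pi)\chi(X^\cD_{\emptyset,o})-\deg(\pi)\frac{q}{q+1}w$. Since both Euler--Poincar\'e characteristics equal those of the generic fibre (proper base change) and $\deg(\pi)$ is independent of $o$, the number $w$ is independent of $o$; evaluating at an odd-degree $o$ then yields $w=2^{\#R}\wp(R)$ for even-degree $o$ as well, and comparison with the supersingular count (which vanishes there) shows those points are all ordinary. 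The case $D\cong\M_2(F)$, where properness fails, is handled separately by quoting Gekeler's result on rank-$2$ Drinfeld modules with $j=0$. If you want to salvage your direct approach, you would need to actually prove the class-number/embedding-number description of the ordinary CM locus; otherwise the cleanest fix is to replace your third paragraph with the $o$-independence argument above.
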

\begin{proof} Let $w$ be the number of points on $X^\cD_{\emptyset, o}(k)$ corresponding to
$\cD$-elliptic sheaves with extra automorphisms. Let $\E\in
\fS^\cD_o$, and let $E\in \cS$ be the ideal corresponding to $\E$
under the bijection of Theorem \ref{thmPapLRS}. Since
$\Aut(\E)=O_\ell(E)^\times$, the number of points in $\fS^\cD_o$
with extra automorphisms is equal to
$$
\left(\fc(\bD,\infty)-\fm(\bD,\infty)\right)/\left(1-(q+1)^{-1}\right),
$$
which is $2^{\# R}\cdot \wp(R\cup o)$, according to Theorem
\ref{thmDvG}. Now suppose $\E$ is ordinary. If $\Aut(\E)\cong
\F_{q^2}^\times$, then $\F_{q^2}$ embeds into $\tf$. Since $\tf$ is
quadratic, we must have $\tf=\F_{q^2}F$. On the other hand, by Lemma
\ref{lem3.1}, the places in $R$ do not split in $\tf$, but $o$
splits. Hence the degree of each place in $R$ must be odd and
$\deg(o)$ must be even. Overall, so far, we can make the following
conclusion. If some place in $R$ has even degree then $w=0$. If all
places in $R$ have odd degrees and $\deg(o)$ is also odd, then
$w=2^{\# R}$, since in this case the $\cD$-elliptic sheaves with
extra automorphisms are necessarily supersingular.

Now suppose $D$ is a division algebra. Fix some non-empty $I\subset
C-R-o-\infty$ and consider the covering $\pi:X^\cD_{I,o}\to
X^\cD_{\emptyset, o}$. The points which ramify in this covering are
exactly the points corresponding to $\cD$-elliptic sheaves with
extra automorphisms. Moreover, the ramification index at such a
point is equal to $\#(\F_{q^2}^\times/\F_q^\times)=q+1$. Hence the
ramifications are tame, and the Riemann-Hurwitz formula implies
\begin{equation}\label{eqRH}
\chi(X^\cD_{I,o})=\deg(\pi)\cdot\chi(X^\cD_{\emptyset,
o})-\deg(\pi)\frac{q}{q+1}w.
\end{equation}
As we mentioned in $\S2$, the Euler-Poincar\'e characteristic and
$\deg(\pi)$ do not depend on the choice of $o$, so $w$ also does not
depend on $o$. Therefore, if $\deg(o)$ is even, then $w=2^{\#
R}\cdot \wp(R)$ and all $\cD$-elliptic sheaves with extra
automorphisms are ordinary.

Finally, suppose $D\cong \M_2(F)$ (equiv. $R=\emptyset$). Then the
problem can be reformulated as a problem about rank-$2$ Drinfeld
$A$-modules over $k$, cf. Remark \ref{remDM}. In those terms, the
problem was solved by Gekeler using different techniques; cf.
\cite[Prop. 7.1]{GekelerGQ}: There is a unique $k$-isomorphism class
of rank-$2$ Drinfeld $A$-modules with automorphism group
$\F_{q^2}^\times$. The $j$-invariant of this class is $0$, and a
Drinfeld module with $j=0$ is supersingular or ordinary, depending
on whether $\deg(o)$ is odd or even; see \cite{Invariants},
\cite{GekelerGQ}.
\end{proof}

\section{Genus formula}\label{Sec4} In this section we compute $\chi(X^\cD_{I,F})$
and $\chi(X^\cD_{\emptyset, F})$. We will compute the first number
using analytic methods, and then deduce the second number from the
Riemann-Hurwitz formula. Throughout the section we assume that $D$
is a division algebra.

\vspace{0.1in}

Let $I$ be a fixed closed non-empty subscheme of $C-R-\infty$. The
double coset space $D^\times(F)\bs
D^\times(\A^\infty)/\cK_{I}^\infty$ has finite cardinality. In fact,
since $D$ is split at $\infty$, the reduced norm induces a bijection
$$
D^\times(F)\bs D^\times(\A^\infty)/\cK_{I}^\infty
\overset{\sim}{\To}
F^\times\bs(\A^\infty)^\times/\Nr(\cK_{I}^\infty).
$$
(This is a consequence of the Strong Approximation Theorem for
$D^\times$, cf. \cite[p. 89]{Vigneras}.) Choose a system $S$ of
representatives for this double coset space. For each $s\in S$, let
$$
\G_{I,s}:=D^\times(F)\cap s \cK_{I}^\infty s^{-1}.
$$
\begin{lem}
Under the natural embedding
$$
\G_{I,s}\hookrightarrow
D^\times(F)/F^\times\hookrightarrow D^\times(\Fi)/\Fi^\times\cong
\PGL_2(\Fi),
$$
$\G_{I,s}$ is a discrete, cocompact, torsion-free
subgroup of $\PGL_2(\Fi)$.
\end{lem}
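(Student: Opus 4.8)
The plan is to establish three properties—discreteness, cocompactness, and torsion-freeness—of $\G_{I,s}$ as a subgroup of $\PGL_2(\Fi)$, leveraging standard facts about quaternion algebras and the structure of $\cK_I^\infty$.

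First I would address discreteness and cocompactness together, since both follow from the standard theory of arithmetic subgroups acting on the Bruhat-Tits tree. The group $\G_{I,s}=D^\times(F)\cap s\cK_I^\infty s^{-1}$ consists of global units subject to congruence and integrality conditions at all finite places. The key input is that $\cK_I^\infty$ is an open compact subgroup of $D^\times(\A^\infty)$: indeed $\cK_I^\infty$ is by definition the kernel of the reduction map $(\cD^\infty)^\times\to \cD_I^\times$, and $(\cD^\infty)^\times=\prod_{x\neq\infty}\cD_x^\times$ is a product of maximal compact subgroups. Since $D$ is a division algebra, $D^\times(F)/F^\times$ embeds discretely and cocompactly in $D^\times(\A)/(\A^\times\cdot D^\times(F))$-type quotients; more concretely, the arithmetic group obtained by intersecting the global points with an open compact subgroup at the finite places, then projecting to $\PGL_2(\Fi)$, is discrete and cocompact in $\PGL_2(\Fi)$. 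I would cite \cite{Vigneras} for this, noting that cocompactness uses precisely the hypothesis that $D$ is a division algebra (so $D^\times(F)/F^\times\Fi^\times$ has no unipotent elements forcing noncompact cusps), while discreteness is automatic from the openness of $s\cK_I^\infty s^{-1}$ in $D^\times(\A^\infty)$.

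The main obstacle, and the part requiring genuine argument, is torsion-freeness. Here I would use the level-$I$ structure crucially. Any torsion element $\gamma\in\G_{I,s}$ is an element of $D^\times(F)$ of finite order modulo $F^\times$; since $D$ is a quaternion algebra, such $\gamma$ generates a commutative subfield $F(\gamma)$ that is either $F$ itself or a quadratic extension, so the roots of unity available are constrained to lie in $\F_{q^2}^\times$ at most. The congruence condition imposed by membership in $s\cK_I^\infty s^{-1}$ forces $\gamma\equiv 1$ modulo the ideal defining $I$. I would argue that a nontrivial root of unity in $D$ that is congruent to $1$ modulo a nonzero ideal $I\subset C-R-\infty$ must be trivial: the eigenvalues of $\gamma$ are roots of unity in $\overline{\F}_q$, and the congruence $\gamma\equiv 1\pmod{I}$ at a place $x\in I$ forces these eigenvalues to be congruent to $1$ in the residue field $\F_x$, which (since $\gcd$ of the order with the residue characteristic considerations, or rather since a nontrivial root of unity cannot reduce to $1$ in a field of the same characteristic class unless it already equals $1$) implies $\gamma=1$ in $\PGL_2(\Fi)$. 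The nonemptiness of $I$ is essential here.

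In carrying this out, the cleanest formulation of the torsion-free step is to observe that $\Stab$-type elements correspond exactly to the automorphisms counted in Corollary \ref{thmEP}: an element of finite order in $\G_{I,s}$ would produce a $\cD$-elliptic sheaf with extra automorphisms admitting a compatible level-$I$ structure, but the level structure rigidifies the object and kills all automorphisms beyond the scalars $\F_q^\times$ (which are already trivial in $\PGL_2$). Thus the geometric content—that adding a nontrivial level structure makes the moduli problem rigid, exactly as in the classical theory of modular curves—is what guarantees torsion-freeness. I expect the discreteness and cocompactness to be routine citations to \cite{Vigneras}, while the torsion-free assertion, though standard in spirit, is where I would spend the most care, pinning down that the congruence condition at the nonempty $I$ eliminates the finite-order elements permitted by the quaternionic structure.
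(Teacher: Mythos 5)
The paper does not actually prove this lemma in situ --- it simply cites \cite[Lem.\ 6.4]{PapCrelle} --- so your proposal supplies an argument where the paper defers to a reference. Your argument is the standard one and is correct in outline: discreteness follows from the discreteness of $D^\times(F)$ in $D^\times(\A)$ together with the openness and compactness of $s\cK_I^\infty s^{-1}$ in $D^\times(\A^\infty)$; cocompactness follows from the compactness of $D^\times(F)\bs D^1(\A)$, which holds precisely because $D$ is a division algebra; and torsion-freeness comes from the congruence condition at the nonempty $I$. Two points in the torsion-freeness step deserve tightening. First, you pass from ``$\gamma$ has finite order modulo $F^\times$'' to ``$\gamma$ is a root of unity'' without comment; this requires an argument, since a priori $\gamma^n$ could be a nonconstant element of $F^\times$. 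The fix is that $\gamma$ lies in a conjugate of $(\cD^\infty)^\times$, so $\gamma$ and hence $\gamma^n$ have valuation zero at every finite place; the product formula then forces $\gamma^n\in\F_q^\times$, whence $\gamma^{n(q-1)}=1$. Second, your parenthetical explanation of why a nontrivial root of unity cannot be congruent to $1$ at a place of $I$ is muddled as written; the clean mechanism is that the kernel of $\cD_x^\times\to(\cD\otimes\cO_{I})_x^\times$ is a pro-$p$ group, while a root of unity in a field of characteristic $p$ has order prime to $p$, so a root of unity in that kernel is trivial.

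One caution about your ``cleanest formulation'' of torsion-freeness via the moduli interpretation: identifying finite-order elements of $\G_{I,s}$ with automorphisms of $\cD$-elliptic sheaves presupposes the rigid-analytic uniformization theorem, which in the paper is stated \emph{after} this lemma and whose very formulation uses the fact that $\G_{I,s}$ is a Schottky group. Taking that route as the actual proof would therefore be circular in the paper's logical order; keep the direct congruence argument as the proof and the moduli heuristic only as motivation.
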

\begin{proof}
See \cite[Lem. 6.4]{PapCrelle}.
\end{proof}

Let $\Omega$ denote the Drinfeld upper half-plane over $\Fi$; see
\cite{Drinfeld}. As a set, $\Omega=\C_\infty-\Fi$. By the previous
lemma, each group $\G_{I,s}\subset \PGL_2(\Fi)$ is a Schottky
subgroup which acts on $\Omega$ via linear fractional
transformations. As follows from the theory of Mumford curves, the
quotient $\G_{I,s}\bs \Omega$ is the analytification of a smooth
projective curve $X_{\G_{I,s}}$ over $\C_\infty$. Denote by
$X^\cD_{I}(\C_\infty)$ the underlying rigid-analytic variety of
$X^\cD_{I,F}\otimes_F \Fi$. We have the following fundamental fact:
\begin{thm}
There is a canonical isomorphism of analytic spaces over $\C_\infty$
$$
X^\cD_{I}(\C_\infty)\cong \bigsqcup_{s\in S} X_{\G_{I,s}}.
$$
\end{thm}
\begin{proof}
See \cite[Thm. 4.4.11]{BS}.
\end{proof}

Now using the theory of rigid-analytic uniformizations of Jacobians
of Mumford curves \cite{MD}, one concludes that
$$
\chi(X^\cD_{I,F})=2\cdot \sum_{s\in S}\chi(\G_{I,s}),
$$
where $\chi(\G_{I,s})=1-\dim_{\Q}H^1(\G_{I,s}, \Q)$.

In \cite{SerreCGD}, Serre developed a theory which allows to compute
$\chi(\G_{I,s})$ as a volume. Serre's result is reproduced in a
convenient form in Proposition 5.3.6 of \cite{LaumonCDV}. Combining
this with Proposition 5.3.9 in \cite{LaumonCDV}, in our situation we
get the following statement:

Let $dg$ be the Haar measure on $\GL_2(\Fi)$ normalized by
$\Vol(\GL_2(\cO_\infty), dg)=1$. Let $dz$ be the Haar measure on
$\Fi^\times$ normalized by $\Vol(\cO_\infty^\times, dz)=1$. Fix the
Haar measure $dh=dg/dz$ on $\PGL_2(\Fi)$ and the counting measure
$d\delta$ of $\G_{I,s}$. Then
\begin{equation}\label{eq-3.5}
\chi(\G_{I,s})=\frac{1}{2}(1-q)\cdot \Vol\left(\G_{I,s}\bs
\PGL_2(\Fi), \frac{dh}{d\delta}\right).
\end{equation}

Now define a Haar measure $d\bar{g}$ on $D^\times(\A)$ as follows:
For $x\in |C|$, normalize the Haar measure $dg_x$ on $D^\times(F_x)$
by $\Vol(\cD_x^\times, dg_x)=1$, and let $d\bar{g}$ be the
restricted product measure. We have
$$
D^\times(F)\bs D^\times(\A)/\cK_{I}^\infty \Fi^\times \cong
\bigsqcup_{s\in S} \left(\G_{I,s}\bs \PGL_d(\Fi)\right),
$$
and the push-forward of $d\bar{g}$ on $D^\times(\A)$ to the double
coset space above induces the measure $dh/d\delta$ on each
$\G_{I,s}\bs \PGL_2(\Fi)$. Hence
$$
\chi(X^\cD_{I,F})=(1-q)\cdot \Vol\left(D^\times(F)\bs
D^\times(\A)/\cK_{I}^\infty \Fi^\times, d\bar{g}\right).
$$

Consider the homomorphism
$$
\norm{\cdot}:D^\times(\A)\to q^{\Z}
$$
given by the composition of the reduced norm $\Nr: D^\times(\A)\to
\A^\times$ with the idelic norm $\prod_{x\in |C|}|\cdot|_x:
\A^\times \to q^\Z$. Denote the kernel of this homomorphism by
$D^{1}(\A)$. The group $D^\times(F)$, under the diagonal embedding
into $D^\times(\A)$, lies in $D^{1}(\A)$, thanks to the product
formula. The quotient $D^\times(F)\bs D^1(\A)$ is compact, hence has
finite volume. It is well-known that $\norm{\cdot}:D^\times(\A)\to
q^{\Z}$ is surjective. The image of $\Fi^\times$ in $q^{\Z}$ is
clearly $q^{2\Z}$. Hence there is an exact sequence
$$
0\to D^\times(F)\bs D^1(\A)/\cO_\infty^\times\to D^\times(F)\bs
D^\times(\A)/\Fi^\times \to \Z/2\Z\to 0,
$$
which implies
$$
\Vol\left(D^\times(F)\bs D^\times(\A)/\cK_{I}^\infty \Fi^\times,
d\bar{g}\right)
$$
$$
= 2\cdot\# \GL_2(\cO_I)\cdot \Vol\left(D^\times(F)\bs D^1(\A),
d\bar{g}\right).
$$
This last volume can be expressed in terms of the zeta-function of
$C$ (see \cite[$\S$4]{PapCrelle}):
$$
\Vol\left(D^\times(F)\bs D^1(\A),
d\bar{g}\right)=\frac{1}{(q-1)^2(q^2-1)}\prod_{x\in R}(q_x-1).
$$

Combining the previous calculations, one obtains:
\begin{thm}\label{thmChiI}
$$
\chi(X^\cD_{I,F})=-\frac{2\cdot \# \GL_2(\cO_I)}{(q-1)(q^2-1)}\cdot
\prod_{x\in R}(q_x-1).
$$
\end{thm}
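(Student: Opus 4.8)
The plan is to assemble Theorem~\ref{thmChiI} by chaining together the three volume identities already established in the preceding discussion, with the final step being a substitution. The entire computation has been reduced to evaluating $\Vol\left(D^\times(F)\bs D^\times(\A)/\cK_{I}^\infty \Fi^\times, d\bar{g}\right)$ and multiplying by $(1-q)$, so the proof is essentially a matter of careful bookkeeping rather than new ideas.

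First I would recall the formula
$$
\chi(X^\cD_{I,F})=(1-q)\cdot \Vol\left(D^\times(F)\bs
D^\times(\A)/\cK_{I}^\infty \Fi^\times, d\bar{g}\right),
$$
which was derived by combining Serre's Euler--Poincar\'e formula~\eqref{eq-3.5} with the rigid-analytic uniformization and the measure-matching between $d\bar{g}$ and $dh/d\delta$. Into this I substitute the factorization obtained from the exact sequence involving $D^1(\A)$, namely
$$
\Vol\left(D^\times(F)\bs D^\times(\A)/\cK_{I}^\infty \Fi^\times,
d\bar{g}\right)= 2\cdot\# \GL_2(\cO_I)\cdot \Vol\left(D^\times(F)\bs D^1(\A), d\bar{g}\right).
$$
The factor $2$ comes from the $\Z/2\Z$ appearing because $\|\cdot\|$ maps $\Fi^\times$ onto $q^{2\Z}$ rather than all of $q^\Z$, while the factor $\#\GL_2(\cO_I)$ records the index contributed by dividing out by $\cK_I^\infty$ in place of the full maximal compact; here one uses $\cD_I^\times \cong \GL_2(\cO_I)$ since $I$ is disjoint from $R$.

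The last ingredient is the zeta-function evaluation
$$
\Vol\left(D^\times(F)\bs D^1(\A),
d\bar{g}\right)=\frac{1}{(q-1)^2(q^2-1)}\prod_{x\in R}(q_x-1),
$$
cited from~\cite[$\S$4]{PapCrelle}. Multiplying the three displayed expressions together gives
$$
\chi(X^\cD_{I,F})=(1-q)\cdot 2\cdot \# \GL_2(\cO_I)\cdot\frac{1}{(q-1)^2(q^2-1)}\prod_{x\in R}(q_x-1),
$$
and since $(1-q)/(q-1)^2 = -1/(q-1)$, this collapses to the claimed
$$
\chi(X^\cD_{I,F})=-\frac{2\cdot \# \GL_2(\cO_I)}{(q-1)(q^2-1)}\cdot
\prod_{x\in R}(q_x-1).
$$

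The calculation is routine once the pieces are in hand; the only step requiring genuine care is the factor-of-two bookkeeping in the exact sequence, where one must verify that the image of $\Fi^\times$ under $\|\cdot\|$ is exactly $q^{2\Z}$ (reflecting that the reduced norm on $D_\infty^\times\cong\GL_2(\Fi)$ composed with the idelic norm doubles degrees at the split place $\infty$) and that quotienting by $\cO_\infty^\times$ on the identity-component side matches the normalization $\Vol(\GL_2(\cO_\infty),dg)=1$ used in Serre's formula. I expect this normalization matching to be the main obstacle, since an error there would throw off the constant by a power of $(q-1)$ or a factor of $2$; everything else is a transparent substitution.
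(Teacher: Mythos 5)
Your proposal is correct and follows essentially the same route as the paper: the theorem is obtained precisely by chaining the identity $\chi(X^\cD_{I,F})=(1-q)\cdot\Vol\left(D^\times(F)\bs D^\times(\A)/\cK_{I}^\infty \Fi^\times, d\bar{g}\right)$ (from the rigid-analytic uniformization and Serre's formula) with the factorization $2\cdot\#\GL_2(\cO_I)\cdot\Vol\left(D^\times(F)\bs D^1(\A),d\bar{g}\right)$ and the zeta-function evaluation of that last volume, then simplifying $(1-q)/(q-1)^2=-1/(q-1)$. Your closing remarks about the factor-of-two and normalization bookkeeping correctly identify where the care is needed, and the paper handles these exactly as you describe.
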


Now consider $\chi(X^\cD_{\emptyset, F})$. Fix some closed point $o$
on $C-I-R-\infty$. We know that $\chi(X^\cD_{\emptyset,
F})=\chi(X^\cD_{\emptyset, o})$, and $\chi(X^\cD_{I,
F})=\chi(X^\cD_{I, o})$. Hence, if we combine Corollary
\ref{cor2.3}, (\ref{eqRH}), Corollary \ref{thmEP} and Theorem
\ref{thmChiI}, then we obtain the formula:
\begin{equation}\label{eqChi}
\chi(X^\cD_{\emptyset, F})=-\frac{2}{(q^2-1)} \prod_{x\in
R}(q_x-1)+\frac{q}{q+1}\cdot 2^{\# R}\cdot \wp(R).
\end{equation}

By Corollary \ref{cor2.3}, $X^\cD_{\emptyset, F}$ is a smooth,
projective, geometrically irreducible curve over $F$. To simplify
the notation, denote this curve by $X^R$ and its fibre over $o$ by
$X^R_o$. Let $g(X^R)$ be the genus of $X^R$. The Euler-Poincar\'e
characteristic $\chi(X^R)$ and $g(X^R)$ are related by the formula
$\chi(X^R)=2-2\cdot g(X^R)$. Hence from (\ref{eqChi}) we get:
\begin{thm}\label{thmGen}
$$
g(X^R)=1+\frac{1}{q^2-1}\left(\prod_{x\in R}(q_x-1)-q\cdot
(q-1)\cdot 2^{\# R-1}\cdot \wp(R)\right).
$$
\end{thm}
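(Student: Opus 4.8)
The plan is to obtain the genus formula as a purely algebraic consequence of equation (\ref{eqChi}), since all the substantive geometric and analytic input has already been assembled. By Corollary \ref{cor2.3}, $X^R=X^\cD_{\emptyset, F}$ is a smooth, projective, geometrically irreducible curve over $F$, so its $\ell$-adic Euler-Poincar\'e characteristic is governed by the usual relation $\chi(X^R)=2-2\,g(X^R)$. Thus the only thing left to do is to substitute the explicit value of $\chi(X^\cD_{\emptyset, F})$ from (\ref{eqChi}) into this relation and solve for $g(X^R)$.

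Concretely, I would set $2-2\,g(X^R)$ equal to the right-hand side of (\ref{eqChi}), namely
$$
2-2\,g(X^R)=-\frac{2}{q^2-1}\prod_{x\in R}(q_x-1)+\frac{q}{q+1}\cdot 2^{\#R}\cdot\wp(R),
$$
and then isolate $g(X^R)$, which gives
$$
g(X^R)=1+\frac{1}{q^2-1}\prod_{x\in R}(q_x-1)-\frac{q}{2(q+1)}\cdot 2^{\#R}\cdot\wp(R).
$$
The one point requiring verification is that the final term matches the coefficient in the stated formula. This is the routine identity
$$
\frac{q}{2(q+1)}\cdot 2^{\#R}=\frac{q}{q+1}\cdot 2^{\#R-1}=\frac{q(q-1)}{q^2-1}\cdot 2^{\#R-1},
$$
where the last equality uses $q^2-1=(q-1)(q+1)$. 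Substituting this and pulling out the common factor $(q^2-1)^{-1}$ yields exactly
$$
g(X^R)=1+\frac{1}{q^2-1}\left(\prod_{x\in R}(q_x-1)-q\,(q-1)\cdot 2^{\#R-1}\cdot\wp(R)\right),
$$
as claimed.

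There is no genuine obstacle at this stage: the difficulty of the theorem lies entirely upstream, in the derivation of (\ref{eqChi}). That derivation combined the analytic uniformization of $X^\cD_{I}(\C_\infty)$ by Schottky groups $\G_{I,s}$, Serre's interpretation of $\chi(\G_{I,s})$ as a volume (equation (\ref{eq-3.5})), the zeta-value computation of $\Vol(D^\times(F)\bs D^1(\A))$, and the Riemann-Hurwitz comparison (\ref{eqRH}) with the level-$I$ cover, whose ramification was controlled by the count of $\cD$-elliptic sheaves with extra automorphisms in Corollary \ref{thmEP}. Given (\ref{eqChi}), the passage to the genus is immediate. The only care needed is bookkeeping of the factor of $2$ relating $2^{\#R}$ and $2^{\#R-1}$ and the cancellation against $q^2-1$, both of which are straightforward.
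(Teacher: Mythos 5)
Your proposal is correct and follows exactly the paper's route: the paper likewise obtains Theorem \ref{thmGen} by substituting (\ref{eqChi}) into the relation $\chi(X^R)=2-2\,g(X^R)$, and your algebraic simplification of the coefficient $\frac{q}{2(q+1)}\cdot 2^{\#R}$ to $\frac{q(q-1)}{q^2-1}\cdot 2^{\#R-1}$ checks out. You also correctly identify that all the substantive work lives upstream in the derivation of (\ref{eqChi}).
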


\begin{cor}\label{corHE} $g(X^R)$ is always divisible by $q$. In particular, $X^R$ is never
an elliptic curve. $g(X^R)=0$ if and only if either $\deg(\fr)=2$,
or $q=4$ and $\fr=(T^4-T)$.
\end{cor}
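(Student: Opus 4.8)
The plan is to extract everything from the genus formula of Theorem~\ref{thmGen}. Throughout I set $n=\#R$ and $P=\prod_{x\in R}(q_x-1)=\prod_{x\in R}(q^{\deg(x)}-1)$, and I use that $n$ is even. For the divisibility claim I would work modulo $q$: since $g(X^R)$ is an integer, Theorem~\ref{thmGen} is the integer identity $(g(X^R)-1)(q^2-1)=P-q(q-1)2^{n-1}\wp(R)$. Reducing mod $q$, the subtracted term dies, $q^2-1\equiv-1$, and each factor $q_x-1\equiv-1$, so $P\equiv(-1)^n=1$ because $n$ is even; hence $-(g(X^R)-1)\equiv1$, i.e. $g(X^R)\equiv0\pmod q$. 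Since $q\geq2$, this rules out $g(X^R)=1$, which gives the statement that $X^R$ is never an elliptic curve.

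For the genus-zero classification I would set $g(X^R)=0$ in Theorem~\ref{thmGen}, obtaining $P=q(q-1)2^{n-1}\wp(R)-(q^2-1)$. As $P>0$, the case $\wp(R)=0$ is impossible, so $\wp(R)=1$ and every place of $R$ has odd degree. Writing $[d]_q=(q^d-1)/(q-1)=1+q+\dots+q^{d-1}$ and cancelling one factor $q-1$, the equation becomes
$$(q-1)^{n-1}\prod_{x\in R}[\deg(x)]_q=q2^{n-1}-(q+1),\qquad(\dagger)$$
where each $[\deg(x)]_q\geq1$ with equality iff $\deg(x)=1$, and $[\deg(x)]_q\geq q^2+q+1$ once $\deg(x)\geq3$ (the smallest odd degree exceeding $1$).

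The heart of the argument is solving $(\dagger)$. For $q\geq3$ I would first exclude any place of degree $\geq3$: such a place forces the left-hand side to be at least $(q-1)^{n-1}(q^2+q+1)\geq q2^{n-1}$ (using $q^2+q+1\geq q(2/(q-1))^{n-1}$ for $q\geq3$), which already exceeds the right-hand side $q2^{n-1}-(q+1)$. Hence all $\deg(x)=1$, the product collapses, and $(\dagger)$ reduces to $(q-1)^{n-1}=q2^{n-1}-(q+1)$. This is an identity for $n=2$, corresponding to $\deg(\fr)=2$ (two distinct rational ramified places); for $n\geq4$ a direct size comparison --- $(q-1)^{n-1}$ overtakes $q2^{n-1}$ for $q\geq5$ already at $n=4$, stays below it for $q=3$, and meets it for $q=4$ exactly at $n=4$ --- leaves only $(q,n)=(4,4)$, where $R$ must consist of all four degree-$1$ places of $\A^1_{\F_4}$, i.e. $\fr=(T^4-T)$.

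The remaining and most delicate case is $q=2$, where $(q-1)^{n-1}=1$ does not grow and the size estimates degenerate; here $(\dagger)$ reads $\prod_{x\in R}(2^{\deg(x)}-1)=2^n-3$. I expect a congruence mod $8$ to be decisive: for $n\geq3$ one has $2^n-3\equiv5\pmod8$, while each factor $2^{\deg(x)}-1$ is $\equiv1$ or $\equiv7\pmod8$ according as $\deg(x)=1$ or $\deg(x)\geq3$, so the product is $\equiv\pm1\pmod8$ and can never equal $5$. This disposes of all even $n\geq4$ simultaneously, leaving $n=2$, i.e. $\deg(\fr)=2$, consistent with the $q\geq3$ analysis. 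Collecting the cases yields exactly the stated dichotomy. The main obstacle is precisely this $q=2$ behaviour: the growth comparison that controls $q\geq3$ breaks down, and the mod-$8$ congruence is what I would use to replace it.
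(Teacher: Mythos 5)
Your proof is correct. The paper states this corollary without any proof, so there is nothing to compare against; your derivation is exactly the intended one, namely a direct analysis of the formula in Theorem~\ref{thmGen}: the mod-$q$ reduction (using that $\#R$ is even and nonzero) gives the divisibility claim, and your case analysis of $(q-1)^{n-1}\prod_{x\in R}[\deg(x)]_q=q2^{n-1}-(q+1)$ --- size estimates for $q\geq 3$, the mod-$8$ congruence for $q=2$ --- correctly isolates the two genus-zero cases $\deg(\fr)=2$ and $(q,\fr)=(4,(T^4-T))$.
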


\begin{rem}
One can also try to classify those $X^R$ which are hyperelliptic. In
this direction, we are able to prove that for a fixed $q$ there are
only finitely many $X^R$ which are hyperelliptic. If $q$ is odd,
then $X^R$ is hyperelliptic if and only if $\deg(\fr)=3$. The proofs
of these results will appear elsewhere.
\end{rem}

It is interesting to compare the formula in Theorem \ref{thmGen} to
the formula for the genus of Shimura curves over $\Q$. First, we
rewrite the formula for $g(X^R)$ in a slightly different form.

Let $K$ be a global field and $L$ be a separable quadratic extension
of $K$. For a place $x$ of $K$, define the \textit{Artin-Legendre
symbol} $\left(\frac{L}{x}\right)$ (cf. \cite[p. 94]{Vigneras}):
$$
\left(\frac{L}{x}\right)=\left\{
                           \begin{array}{ll}
                             1, & \hbox{if $x$ splits in $L$;} \\
                             -1, & \hbox{if $x$ is inert in $L$;} \\
                             0, & \hbox{if $x$ ramifies in $L$.}
                           \end{array}
                         \right.
$$

Using this notation, we have
$$
g(X^R)=1+\frac{1}{q^2-1}\prod_{x\in
R}(q_x-1)-\frac{1}{2}\frac{q}{q+1}\prod_{x\in
R}\left(1-\left(\frac{\F_{q^2}F}{x}\right)\right).
$$

Now let $H$ be the indefinite quaternion division algebra over $\Q$
with reduced discriminant $d$. This means that $H\otimes \R\cong
\M_2(\R)$ and $H$ is ramified exactly at the primes dividing $d$.
Let $\cO$ be a maximal $\Z$-order in $H$. Denote by $\G^d$ the group
of units of positive norm in $\cO$. Under the embedding
$$
\G^d\hookrightarrow H\otimes \R\cong \M_2(\R)
$$
the image of $\G^d$ lies in $\SL_2(\R)$, hence naturally acts on the
upper half-plane $\fh$. The quotient $\fh/\G^d$ is a smooth,
projective, geometrically irreducible curve $X^d$ which has a model
over $\Q$. (In fact, $X^d$ is a moduli space of principally
polarized abelian surfaces equipped with an action of $\cO$.) The
genus $g(X^d)$ is given by the formula (see \cite[p.
120]{Vigneras}):
$$
g(X^d)=1+\frac{1}{12}\prod_{p|d}(p-1)-\frac{1}{2}
\left(\frac{1}{2}\prod_{p|d}\left(1-\left(\frac{\Q(\sqrt{-1})}{p}\right)\right)+
\frac{2}{3}\prod_{p|d}\left(1-\left(\frac{\Q(\sqrt{-3})}{p}\right)\right)\right).
$$

We see that the formulas for $g(X^d)$ and $g(X^R)$ are strikingly
similar. Even the terms $1/12$ and $1/(q^2-1)$ are of similar
nature: $-\zeta_\Z(-1)=1/6$ and $-\zeta_A(-1)=1/(q^2-1)$, where
$\zeta_\Z(s)$ is the Riemann zeta-function of $\Z$ and $\zeta_A(s)$
is the zeta-function of $A$.

\section{Asymptotically optimal series of curves}\label{Sec5}

Let $X$ be a curve of genus $g(X)$ defined over $\F_q$. (In this
section, a \textit{curve} means smooth, projective, geometrically
irreducible algebraic curve.) According to a celebrated result of
Weil, the number of $\F_q$-rational points on $X$ is bounded by
$$
\# X(\F_q)\leq q+1+2g(X)\sqrt{q}.
$$
On the other hand, it turns out that when $g(X)$ is very large
compared to $q$, then Weil's bound can be significantly improved.
Drinfeld and Vladut \cite{DV} showed that
\begin{equation}\label{VDbound}
\underset{X}{\lim\mathrm{sup}}\left(\frac{\#
X(\F_q)}{g(X)}\right)\leq \sqrt{q}-1,
\end{equation}
where $X$ runs over the set of all curves over $\F_q$. A series of
curves $\{X_i\}_{i\in \mathbb{N}}$ is called \textit{asymptotically
optimal} if
$$
\underset{i\to
\infty}{\lim}\left(\frac{\#X_i(\F_q)}{g(X_i)}\right)=\sqrt{q}-1,
$$
or, in other words, $\{X_i\}_{i\in \mathbb{N}}$ realizes the bound
(\ref{VDbound}). If $q$ is not a square then no asymptotically
optimal series of curves is known. If $q$ is a square then
asymptotically optimal series always exist, but every known such
series has the property that for all sufficiently large $i$ the
curve $X_i$ is a modular curve of some sort.

In \cite{PapMRL}, we showed that if one fixes $D$ and considers the
modular curves of $\cD$-elliptic sheaves with appropriate level
structures, then one obtains asymptotically optimal series of curves
over $\F_{q^2}$ as the level varies. Using the genus formula proven
in the current paper, we can show that the sequence
``perpendicular'' to the one mentioned in the previous sentence is
also asymptotically optimal. Namely, we \textit{fix} the level
(actually $I=\emptyset$) and \textit{vary} $D$:

\begin{thm}
Let $o=(T)$ be fixed. Let $R$ run over all subsets of $|C|-o-\infty$
having even cardinality. Then
$$
\underset{\deg(\fr)\to \infty}{\lim}\left(\frac{\#
X^R_o(\F_{q^2})}{g(X^R_o)}\right)=q-1.
$$
\end{thm}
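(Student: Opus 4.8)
The plan is to show that as $\deg(\fr)\to\infty$ the point count $\#X^R_o(\F_{q^2})$ and the genus $g(X^R_o)$ are each asymptotic to the same explicit product, so that their ratio tends to $q-1$. I would first handle the genus, which is already pinned down by Theorem \ref{thmGen}: since $X^R_o$ is the fibre over $o$ of the smooth proper curve $X^R/F$, its genus equals $g(X^R)$, and by Theorem \ref{thmGen}
$$
g(X^R_o)=1+\frac{1}{q^2-1}\left(\prod_{x\in R}(q_x-1)-q(q-1)2^{\#R-1}\wp(R)\right).
$$
The correction term involving $\wp(R)$ is bounded by $q(q-1)2^{\#R-1}$, which grows only like $2^{\#R}$, whereas the main term $\prod_{x\in R}(q_x-1)$ grows like $\prod_{x\in R}q_x=q^{\deg(\fr)}$; since each $q_x\ge q$, the ratio of the correction term to the main term is $O((2/q)^{\#R})$ when $q\ge 3$ and needs a separate estimate when $q=2$, but in all cases it tends to $0$ as $\deg(\fr)\to\infty$. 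Hence
$$
g(X^R_o)\sim \frac{1}{q^2-1}\prod_{x\in R}(q_x-1).
$$

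**Counting the rational points.** The more interesting half is to bound $\#X^R_o(\F_{q^2})$ from below by (asymptotically) the supersingular points, which by Corollary \ref{thmSS} are all rational over $\F_o^{(2)}$. With $o=(T)$ we have $\deg(o)=1$, so $\F_o=\F_q$ and $\F_o^{(2)}=\F_{q^2}$; thus every supersingular point lies in $X^R_o(\F_{q^2})$. Therefore
$$
\#X^R_o(\F_{q^2})\ge \#\fS^\cD_o=\frac{1}{q^2-1}\prod_{x\in R\cup o}(q_x-1)+\frac{q}{q+1}2^{\#R}\wp(R\cup o).
$$
Since $o=(T)$ has $q_o=q$, the factor $(q_o-1)=q-1$ contributes, giving main term $\frac{q-1}{q^2-1}\prod_{x\in R}(q_x-1)=\frac{1}{q+1}\prod_{x\in R}(q_x-1)$. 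Combining with the genus asymptotic yields the lower bound
$$
\liminf_{\deg(\fr)\to\infty}\frac{\#X^R_o(\F_{q^2})}{g(X^R_o)}\ge \frac{(q^2-1)/(q+1)}{1}=q-1.
$$

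**The matching upper bound and the main obstacle.** The hard part is the upper bound, and it is where the Drinfeld--Vladut inequality (\ref{VDbound}) does the essential work: applied over $\F_{q^2}$ it gives $\#X^R_o(\F_{q^2})\le q^2+1+2g(X^R_o)(q^2)^{1/2}=q^2+1+2q\,g(X^R_o)$. Dividing by $g(X^R_o)$ and letting $\deg(\fr)\to\infty$ (so $g(X^R_o)\to\infty$) gives $\limsup \#X^R_o(\F_{q^2})/g(X^R_o)\le 2q$, which is \emph{not} sharp enough by itself. The real obstacle is thus to produce the precise upper bound $q-1$, and the clean way to do this is to invoke (\ref{VDbound}) for the \emph{whole family} rather than a single curve: the sequence $\{X^R_o\}$ indexed by $R$ (with $g\to\infty$) is a family of distinct curves over $\F_{q^2}$, so (\ref{VDbound}) directly forces $\limsup \#X^R_o(\F_{q^2})/g(X^R_o)\le \sqrt{q^2}-1=q-1$. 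Together with the supersingular lower bound $\ge q-1$ established above, the limit exists and equals $q-1$, which simultaneously proves the theorem and shows that $\{X^R_o\}$ is an asymptotically optimal series. The one point requiring care is confirming $g(X^R_o)\to\infty$ as $\deg(\fr)\to\infty$, which is immediate from the genus asymptotic since $\prod_{x\in R}(q_x-1)\ge (q-1)^{\#R}\ge (q-1)^{\deg(\fr)/\deg_{\max}}$ grows without bound.
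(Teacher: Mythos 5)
Your proposal is correct and follows exactly the paper's argument: the lower bound $\liminf \geq q-1$ comes from comparing the supersingular point count of Corollary \ref{thmSS} (all such points being $\F_{q^2}$-rational since $\deg(o)=1$) with the genus formula of Theorem \ref{thmGen}, and the matching upper bound comes from the Drinfeld--Vladut inequality (\ref{VDbound}) applied over $\F_{q^2}$. The only difference is that you spell out the asymptotics (including the check that the $\wp(R)$ correction terms are negligible and that $g(X^R_o)\to\infty$), which the paper leaves implicit.
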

\begin{proof}
By comparing the expression for the genus in Theorem \ref{thmGen}
with the expressions for the number of supersingular points in
Corollary \ref{thmSS}, one concludes that $\lim\inf$ of the sequence
in question is greater or equal to $(q-1)$. (We know that all
supersingular points are rational over $\F_{(T)}^{(2)}\cong
\F_{q^2}$.) On the other hand, the $\lim\sup$ of the same sequence
is bounded from above by $(q-1)$ according to (\ref{VDbound}). The
claim follows.
\end{proof}

Even though the series $\{X^R_o\}_R$ is asymptotically optimal over
$\F_o^{(2)}$, the individual curves fail to have a particularly
large number of rational points when $\deg(\fr)$ is small. The
reason for this is that the number of supersingular points on
$X^R_o$ is roughly $(q_o-1)g(X^R_o)$ for any $R$, whereas the Weil
bound for a curve over $\F_{o}^{(2)}$ is roughly $2g(X)q_o$. The
Weil bound is known to be quite close to the best possible when the
genus is relatively small, hence it is not surprising to find that
the known maximal number of points on curves of genus $g(X^R_o)$ is
approximately twice as large as the provable number of points on
$X^R_o$. We give a table for comparison when $q=2,3$ and genus $\leq
50$.

In the tables below we consider the curves $X^R_o$ when $\# R=2$ and
$o=(T)$. The first column indicates the degrees of the places in
$R$, the second and the third columns give the genus and the number
of supersingular points on $X^R_o$, respectively. The forth and the
fifth columns indicate the known maximum number of
$\F_{q^2}$-rational points on a curve of genus $g(X^R)$ and the best
known theoretic upper bound, respectively; these numbers are taken
from \cite{vdGvdV}. \vfill

\vspace{0.1in}
\begin{center}
\begin{tabular}{cccccc}
\hline\hline & &  & \# supersingular &
max \# of $\F_{q^2}$-rational & upper\\
$\mathbf{q=2}$ & $R$ & $g(X^R)$ & points &  points known & bound\\
\hline\\
& $(1,2)$ & 2 & 1& 10& 10\\
& $(1,3)$ & 2 & 5& 10& 10\\
& $(1,4)$ & 6 & 5& 20& 20\\
& $(1,5)$ & 10 & 13& 27& 27\\
& $(1,6)$ & 22 & 21& 42& 48\\
& $(1,7)$ & 42 & 45& 75& 80\\
\hline\\
& $(2,2)$ & 4 & 3& 15& 15\\
& $(2,3)$ & 8 & 7& 21& 24\\
&$(2,4)$ & 16 & 15& 36& 38\\
& $(2,5)$ & 32 & 31& 57& 65\\
\hline\\
& $(3,3)$ & 16 & 19& 36& 38\\
& $(3,4)$ & 36 & 35& 64& 71\\
\hline\hline
\end{tabular}
\end{center}

\vspace{0.2in}
\begin{center}
\begin{tabular}{cccccc}
\hline\hline  & &  & \# supersingular &
max \# of $\F_{q^2}$-rational & upper\\
$\mathbf{q=3}$ & $R$ & $g(X^R)$ & points &  points known & bound\\
\hline\\
& $(1,1)$ & 0 & 4& 4& 4\\
& $(1,2)$ & 3 & 4& 28& 28\\
& $(1,3)$ & 6 & 16& 35& 40\\
& $(1,4)$ & 21 & 40& 88& 95\\
\hline\\
& $(2,2)$ & 9 & 16& 48& 50\\
& $(2,3)$ & 27 & 52& 104& 114\\
\hline\hline
\end{tabular}
\end{center}

% ------------------------------------------------------------------------

%\bibliographystyle{amsplain}
%\bibliography{genusD3}

\end{document}